\theoremstyle{plain} 
\newtheorem{thm}{Theorem}[section]
\newtheorem{lemma}[thm]{Lemma}
\newtheorem{prop}[thm]{Proposition}
\newtheorem{corl}[thm]{Corollary}
\newtheorem{exmp}[thm]{Example}
\theoremstyle{definition} 
\newtheorem{defn}[thm]{Definition}
\theoremstyle{remark} 
\newtheorem{rmk}[thm]{Remark}
\numberwithin{equation}{section}
\DeclareMathOperator{\Iso}{Iso} 
\newcommand{\A}{\mathcal{A}}		
\newcommand{\B}{\mathcal{B}}		
\newcommand{\Cb}{\mathcal{C}}		
\newcommand{\R}{\mathbb{R}}			
\newcommand{\C}{\mathbb{C}}			
\newcommand{\Z}{\mathbb{Z}}	
\newcommand{\N}{\mathbb{N}}			
\newcommand{\T}{\mathbb{T}}			
\newcommand{\G}{\mathcal{G}}		    
\newcommand{\Go}{\mathcal{G}^{(0)}}		
\newcommand{\Spec}{\mathrm{Sp}}	
\title[Groupoids and Hermitian $*$-Banach algebras]{Groupoids and Hermitian Banach $*$-algebras}
\author{Are Austad and Eduard Ortega}
\begin{document}
	\maketitle
	
	\begin{abstract}
		We study when the twisted groupoid Banach $*$-algebra $L^1(\G,\sigma)$ is Hermitian. 
		In particular, we prove that Hermitian groupoids satisfy the weak containment property. Furthermore, we find that for $L^1(\G,\sigma)$ to be Hermitian it is sufficient that $L^1 (\G_\sigma)$ is Hermitian. Moreover, if $\G$ is ample, we find necessary conditions for $L^1(\G,\sigma)$ to be Hermitian in terms of the fibers $\G^x_x$. 
	\end{abstract}

	\section{Introduction}

  Due to the seminal paper \cite{Gel}, Hermitian Banach $*$-subalgebras of $C^*$-algebras have been intimately linked to Wiener's lemma and the notion of spectral invariance. As such, they appear in a variety of areas of mathematics, such as approximation theory, time-frequency analysis and signal processing and noncommutative geometry, see e.g.\ \cite{Bill, Gro,GoKl,GoKl2,GoLe}. 


We say that a locally compact group $G$ is Hermitian if the convolution algebra $L^1 (G)$ is a Hermitian Banach $*$-algebra. In the realm of Hermitian locally compact groups the literature is very extensive (see for example \cite{Palma,Palmer}). The class of Hermitian locally compact groups includes compact extensions of nilpotent groups \cite{Ludwig79} and famously also all compactly generated groups of polynomial growth \cite{Losert}. In \cite{SaWi} it was proved that a Hermitian locally compact group must be amenable.  
In another, but related context, it was proved in \cite{LepPo} that if $\A$ is a Hermitian Banach $*$-algebra and $\Gamma$ is a compact group acting on $\A$ by $*$-automorphisms, the generalized $L^1$-algebra $L^1(\Gamma,\A)$ is Hermitian.

One of the main motivations for this article is the use of spectral invariance of  noncommutative tori in time-frequency analysis as in e.g.\ \cite{GoLe}. Noncommutative tori can be viewed as the twisted group $C^*$-algebra $C^*(\Z^{2d},\sigma)$, where $\sigma$ is the Heisenberg $2$-cocycle. However, this can generalized to the group $C^*$-algebra  $C^*(\Delta,\sigma)$ where $\Delta$ is a closed subgroup of the time-frequency plane $G\times\widehat{G}$, where $G$ is a locally compact abelian group. This extension has been used to construct finitely generated modules over noncommutative solenoids  \cite{EJL,LaPa}. Spectral invariance of $L^1(\Delta,\sigma)$  was studied in \cite{Austad20}. Pushing forward this generalization, given a quasicrystal $\Lambda\subseteq \R^{2d}$ one can  construct a twisted groupoid $C^*$-algebra $C^*(\G_{\Lambda},\sigma)$ where $\G_\Lambda$ is an étale groupoid. Finitely generated projective modules over $C^*(\G_\Lambda, \sigma)$ were constructed in \cite{Kre}.


This paper together with \cite{AusOrt} is a starting point for a project  that aims to extend the tools of operator algebras applied to Gabor analysis for lattices in the time-frequency plane (see for example in \cite{AuEn20, EJL, Luef,Rieff}) to the more general context of quasicrystals \cite{BrMe,Kre}. In particular, in this paper  we focus our attention on 
when the Banach $*$-algebra $L^1(\G,\sigma)$ associated to a locally compact groupoid $\G$ with a twist $\sigma$ is Hermitian. 

The paper is structured as follows.
First, in Section \ref{sec:prelims} we give the necessary background and results about groupoids and Banach $*$-algebras.


In Section \ref{quasi-her-sec} we prove the main result of the paper, namely,  if $L^1(\G,\sigma)$  is Hermitian, then the full and the reduced twisted groupoid $C^*$-algebras coincide  (also known as the weak containment property). When $\G$ is a group, this is equivalent to amenability. However, in the case of locally compact groupoids  the situation is more subtle. 


Then, in Section \ref{sec_non_herm} we give prove that for ample groupoids a necessary condition for  $L^1(\G,\sigma)$ to be Hermitian is that the "fibers'' $L^1(\G_x^x,\sigma_x)$ are Hermitian for every $x\in \Go$. Using this we give a simple 
example of an amenable étale groupoid such that $L^1(\G,\sigma)$ is not Hermitian.  

Finally, in Section \ref{sec_exa_herm_twist} we give sufficient conditions for 	$L^1(\G,\sigma)$ to be Hermitian. We construct the twisted groupoid $\G_\sigma$, and show that $L^1(\G,\sigma)$ is Hermitian if the "untwisted'' groupoid Banach $*$-algebra $L^1(\G_\sigma)$ is Hermitian. In particular, we prove that if $\Gamma$ is a compact group or a locally compact abelian group acting by homeomorphisms on a locally compact Hausdorff space $X$, then $L^1(X\rtimes \Gamma,\sigma)$ is Hermitian for every group $2$-cocycle $\sigma$ of $\Gamma$, where $X\rtimes \Gamma$ is the transformation groupoid.

	\section{Preliminaries}\label{sec:prelims}
	
	\subsection{Hermitian Banach $*$-algebras}\label{sec:prelims-C*-uniqueness}

	If $\A$ is a unital Banach algebra, we denote by $\A^{-1}$ the set of invertible elements of $\A$. Given a unital Banach algebra $\A$   we denote by 
	$$\Spec_\A(a)=\{\lambda\in \C:  \lambda 1_\A - a \notin \A^{-1}\}\,,$$ 
	the \emph{spectrum of $a$} in $\A$, and by 
	$$r_\A(a):=\sup \{|\lambda|: \lambda \in \Spec_\A(a) \}=\lim_{n\to \infty}\|f^n\|^{1/n}$$
	 the \emph{spectral radius} of $a$. If $\A$ is not unital, given $a\in \A$ we define  $\Spec_\A(a):=\Spec_{\A^+}(a)$ and $r_\A(a)=r_{\A^+}(a)$, where $\A^+$ is the minimal unitization of $\A$.

	Now let $\A$ be a Banach $*$-algebra. Then $a\in \A$ is  \emph{Hermitian} if $a^*=a$. If $\mathcal{S}$ is a subset of $\A$, we denote by $\mathcal{S}_h=\{a\in \mathcal{S}: a^*=a\}$ the set of Hermitian elements in $\mathcal{S}$. 
	
	\begin{defn}
		A Banach $*$-algebra  $\A$  is \emph{Hermitian} if 
		$$\Spec_\A(a)\subseteq \R$$
		for every $a\in \A_h$. $\A$ is called \emph{symmetric} if
		$$\Spec_\A(aa^*)\subseteq [0,\infty)$$
				for every $a\in \A$.
	\end{defn}
	\begin{rmk}
By the celebrated Shirali-Ford theorem, a Banach $*$-algebra is Hermitian if and only if it is symmetric.
	\end{rmk}
	A \emph{$*$-representation} of a Banach $*$-algebra $\A$ is a $*$-homomorphism $\pi \colon \A \to B (\mathcal{H})$, where $B(\mathcal{H})$ denotes the bounded linear operators on a Hilbert space $\mathcal{H}$. We say $\A$ is \emph{reduced} 
	if $\A_\mathcal{R}=\{a\in\A: \pi(a)=0\text{ for every }*-\text{representation } \pi \text{ of }\A\}=\{0\}$. All Banach $*$-algebras we consider in the sequel will be \emph{reduced}. 
	The \emph{enveloping $C^*$-algebra} of a reduced 
	Banach $*$-algebra $\A$ is the completion of $\A$ with respect to the norm 
	$$\|a\|:=\sup\{\|\pi(a)\|: \pi:\A\to B(\mathcal{H}) \text{ is a $*$-representation} \}$$ 
	for every $a\in \A$, and it is denoted by $C^*(\A)$. The enveloping $C^*$-algebra of a Banach $*$-algebra always exists \cite[Section 10.1]{Palmer}.

	\subsection{Invariant spectral radius}
	
	\begin{defn}
		We say that $\A\subseteq \B$ is a \emph{nested pair} of reduced 
		Banach $*$-algebras if $\A$ and $\B$ are reduced Banach $*$-algebras and $\A$ embeds continuously into $\B$ as a dense $*$-subalgebra. A \emph{nested triple} of reduced 
		Banach $*$-algebras is defined similarly.
	\end{defn}
	
	\begin{defn}
		Let $\A\subseteq \B$ be a nested pair of reduced Banach $*$-algebras and $\mathcal{S}$ a (not necessarily closed) $*$-subalgebra of $\A$. We say $\mathcal{S}_h$ has \emph{invariant spectral radius} in $(\A,\B)$  if 
		$$r_\A(a)=r_\B(a)\,,$$
		for every $a\in \mathcal{S}_h$. If $\A_h$ has invariant spectral radius in $(\A,\B)$, we say that $\A_h$ has \emph{invariant spectral radius} in $\B$. 
		Moreover, we say $\mathcal{S}$ is a spectrally invariant subalgebra of $(\A, \B)$ if
		$$\Spec_\A(a)=\Spec_\B(a)\,,$$
		for every $a \in \mathcal{S}$. If $\A$ is a spectrally invariant subalgebra of $(\A, \B)$, we say $\A$ is spectrally invariant in $\B$. 
	\end{defn}

	Clearly, if $\mathcal{S}$ is a spectrally invariant subalgebra of 
	$(\A,\B)$, then $\mathcal{S}_h$ has invariant spectral radius in $(\A,\B)$. The Barnes-Hulanicki Theorem \cite{Ba90} provides a partial converse when $\B$ is a $C^*$-algebra.

\begin{thm}\label{Hulanicki}
Let $\A$ be a Banach $*$-algebra, $\mathcal{S}$
 a $*$-subalgebra of $\A$, and $\pi:\A\to B(\mathcal{H})$ a faithful $*$-representation. If $\A$ is unital, we assume that $\pi(1_\A)=1_{B(\mathcal{H})}$. If 
$$r_\A(a)=\|\pi(a)\|\,,$$ for all $a\in \mathcal{S}_h$, then 
\begin{equation*}
	\Spec_\A(a)=\Spec_{\B(\mathcal{H})}(\pi(a))
\end{equation*}
for every $a\in \mathcal{S}$.	
\end{thm}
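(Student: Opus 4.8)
The plan is to prove the two inclusions separately, the containment $\Spec_{B(\mathcal H)}(\pi(a))\subseteq\Spec_\A(a)$ being automatic and the reverse being the substance. After replacing $\A$ by its unitization $\A^+$ when necessary (extending $\pi$ by $\pi(1)=1$, as permitted by the hypothesis in the unital case), $\pi$ becomes a unital homomorphism, so whenever $\lambda 1-a$ is invertible in $\A^+$ its image $\lambda 1-\pi(a)$ is invertible in $B(\mathcal H)$; this gives $\Spec_{B(\mathcal H)}(\pi(a))\subseteq\Spec_\A(a)$ for free. The entire content is therefore to show the converse: if $\lambda 1-\pi(a)$ is invertible in $B(\mathcal H)$, then $\lambda 1-a$ is invertible in $\A^+$.

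First I would establish a key self-adjoint inclusion: for every $s\in\mathcal S_h$,
$$\Spec_\A(s)\subseteq \Spec_{B(\mathcal H)}(\pi(s))\cup\{0\}.$$
The only tool is the radius identity on $\mathcal S_h$, so I would feed it the Hermitian elements $q(s)$, where $q$ runs over real polynomials with $q(0)=0$ (these lie in $\mathcal S_h$ since $\mathcal S$ is a $*$-subalgebra). As $\pi(s)$ is self-adjoint, $\|\pi(q(s))\|=r_{B(\mathcal H)}(q(\pi(s)))$, and the spectral mapping theorem converts the hypothesis $r_\A(q(s))=\|\pi(q(s))\|$ into
$$\max_{\mu\in\Spec_\A(s)}|q(\mu)|=\max_{\nu\in\Spec_{B(\mathcal H)}(\pi(s))}|q(\nu)|\qquad\text{for all real }q\text{ with }q(0)=0.$$
Since $\Spec_{B(\mathcal H)}(\pi(s))\cup\{0\}$ is a compact subset of $\R$, hence polynomially convex, any putative $\mu_0\in\Spec_\A(s)$ lying off this set could be separated from it by a polynomial, and replacing $q$ by $q-q(0)$ would yield an admissible polynomial violating the displayed equality; this Runge/polynomial-convexity step forces the claimed inclusion.

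With this inclusion I would finish by the standard passage through $a^*a$ and $aa^*$. For $\lambda\neq 0$ write $b=(\lambda 1-a)^*(\lambda 1-a)=|\lambda|^2 1+s$ with $s=-\bar\lambda a-\lambda a^*+a^*a\in\mathcal S_h$; then $\pi(b)=(\lambda 1-\pi(a))^*(\lambda 1-\pi(a))$ is positive and invertible, so $\Spec_{B(\mathcal H)}(\pi(s))\subseteq(-|\lambda|^2,\infty)$, and the self-adjoint inclusion gives $-|\lambda|^2\notin\Spec_\A(s)$, i.e.\ $b$ is invertible in $\A^+$. The same argument applied to $(\lambda 1-a)(\lambda 1-a)^*$ produces a two-sided inverse of $\lambda 1-a$. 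The remaining value $\lambda=0$, which is exactly invertibility of $a$ itself, cannot be reached this way because the polynomials with $q(0)=0$ carry no information at the point $0$; here I would take $b=a^*a$, note $\Spec_\A(b)\subseteq[\delta,M]\cup\{0\}$ with the two pieces separated, and apply holomorphic functional calculus to the function equal to $1$ near $[\delta,M]$ and $0$ near $0$. The resulting idempotent $e$ lies in $\A$ (as the function vanishes at $0$) and satisfies $\pi(e)=1$; faithfulness of $\pi$ then forces $e$ to be a unit of $\A$, which either identifies $e=1$ (so $0\notin\Spec_\A(b)$) or contradicts non-unitality, in both cases yielding that $a^*a$, and symmetrically $aa^*$, is invertible.

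The main obstacle is precisely the interplay of the subalgebra restriction with the absence of a unit: the hypothesis controls only the spectral radius of elements of $\mathcal S_h$, and one cannot translate by scalars without leaving $\mathcal S$. Upgrading the equality of spectral radii to the spectral inclusion for self-adjoint elements, while remaining inside $\mathcal S$ by using only polynomials with vanishing constant term, is the delicate step, and it is exactly what makes the single point $0$ (genuine invertibility) require the separate functional-calculus argument rather than following formally from the radius identity.
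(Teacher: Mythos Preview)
The paper does not prove this theorem; it is quoted as background from Barnes \cite{Ba90}. Your argument follows the standard route to the Barnes--Hulanicki theorem and is essentially correct. Two points deserve a little more care.

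First, in the polynomial-convexity step you need the separating polynomial $q$ to have \emph{real} coefficients (otherwise $q(s)\notin\mathcal S_h$), and at that stage you have not established that $\Spec_\A(s)\subseteq\R$, so the putative $\mu_0$ could be non-real. For real $\mu_0\notin K:=\Spec_{B(\mathcal H)}(\pi(s))\cup\{0\}$ Weierstrass approximation on an interval containing $K\cup\{\mu_0\}$ gives a real separating polynomial. For non-real $\mu_0=\alpha+i\beta$ one can take the real polynomial $p(x)=T^2-(x-\alpha)^2$ with $T>\max_{x\in K}|x-\alpha|$; then $p>0$ on $K$ with $\max_K p\le T^2$, while $|p(\mu_0)|=T^2+\beta^2>T^2$. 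In either case, raising to a high power and then subtracting the constant term (which is harmless since $0\in K$) produces an admissible $q$ with $q(0)=0$ and $|q(\mu_0)|>\max_K|q|$. With this refinement your ``self-adjoint inclusion'' $\Spec_\A(s)\subseteq\Spec_{B(\mathcal H)}(\pi(s))\cup\{0\}$ is fully justified.

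Second, your conclusion in the $\lambda=0$, non-unital branch is slightly misstated. If $\A$ is genuinely non-unital then no element of $\A$ is invertible in $\A^+$, so one cannot conclude ``$a^*a$ is invertible''. What the idempotent/faithfulness argument actually shows is that the assumption ``$\pi(a)$ is invertible'' forces $\A$ to have a unit; hence in the non-unital case $\pi(a)$ is never invertible, so $0\in\Spec_{B(\mathcal H)}(\pi(a))$, which matches the automatic $0\in\Spec_\A(a)$. In the unital case your argument gives $e=1_\A$ and then $g(a^*a)\cdot a^*a=e=1_\A$ with $g(z)=z^{-1}$ near $[\delta,M]$, so $a^*a$ (and symmetrically $aa^*$) is invertible, whence $a$ is invertible. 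With this correction the $\lambda=0$ case is handled, and the proof is complete.
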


In \cite[Proposition 2.7]{SaWi} they prove a very useful property 
related to invariant spectral radius.
\begin{prop}\label{same_C_env}
	Let $\A\subseteq \B$ be a nested pair of reduced 
	Banach $*$-algebras, and let $\mathcal{S}$ be a dense $*$-subalgebra of $\A$. Suppose that $\mathcal{S}_h$ has invariant spectral radius in $(\A,\B)$. Then $\A$ and $\B$ have the same $C^*$-envelope. 
\end{prop}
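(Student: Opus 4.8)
The plan is to show that the two enveloping $C^*$-seminorms agree on $\A$, so that both $C^*(\A)$ and $C^*(\B)$ arise as the completion of $\A$ with respect to one and the same norm. Write $\|a\|_{C^*(\A)}=\sup_\pi\|\pi(a)\|$ for the enveloping $C^*$-seminorm of $\A$ and similarly $\|\cdot\|_{C^*(\B)}$ for $\B$. Since any $*$-representation of $\B$ restricts to a $*$-representation of $\A$, one immediately gets $\|a\|_{C^*(\B)}\le\|a\|_{C^*(\A)}$ for every $a\in\A$; this is the easy half, and it produces a surjective $*$-homomorphism $C^*(\A)\to C^*(\B)$ that is the identity on $\A$. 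The whole content of the proposition is the reverse inequality.

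For the reverse inequality I would show that every $*$-representation of $\A$ is contractive for the norm of $\B$, and hence extends to $\B$. Fix a $*$-representation $\pi\colon\A\to B(\mathcal H)$ and let $a\in\mathcal S$, so that $a^*a\in\mathcal S_h$. Using the $C^*$-identity in $B(\mathcal H)$ together with the fact that a self-adjoint operator has norm equal to its spectral radius, I would estimate
\begin{equation*}
\|\pi(a)\|^2=\|\pi(a^*a)\|=r_{B(\mathcal H)}(\pi(a^*a))\le r_\A(a^*a)=r_\B(a^*a)\le\|a^*a\|_\B\le\|a\|_\B^2 .
\end{equation*}
Here the first inequality is because a $*$-homomorphism cannot enlarge the spectrum, the middle equality is precisely the invariant spectral radius hypothesis applied to $a^*a\in\mathcal S_h$, and the last two steps use that the spectral radius is dominated by the norm and that the involution on $\B$ is isometric. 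Thus $\|\pi(a)\|\le\|a\|_\B$ for all $a\in\mathcal S$.

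Since $\mathcal S$ is dense in $\A$ and $\A$ is dense in $\B$, the subalgebra $\mathcal S$ is $\|\cdot\|_\B$-dense in $\B$, so the $\|\cdot\|_\B$-contractive map $\pi|_{\mathcal S}$ extends uniquely to a $\|\cdot\|_\B$-continuous linear map $\widetilde\pi\colon\B\to B(\mathcal H)$; continuity of multiplication and involution forces $\widetilde\pi$ to be a $*$-representation of $\B$, and since $\widetilde\pi$ agrees with $\pi$ on the $\|\cdot\|_\A$-dense set $\mathcal S$ it agrees with $\pi$ on all of $\A$. Consequently $\|\pi(a)\|=\|\widetilde\pi(a)\|\le\|a\|_{C^*(\B)}$ for every $a\in\A$, and taking the supremum over $\pi$ gives $\|a\|_{C^*(\A)}\le\|a\|_{C^*(\B)}$. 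Together with the easy half this yields equality of the two enveloping seminorms on $\A$, and since $\A$ is dense in both $C^*(\A)$ and $C^*(\B)$ the identity on $\A$ extends to a $*$-isomorphism $C^*(\A)\cong C^*(\B)$.

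The step I expect to be the crux is the displayed estimate and the extension argument built on it: everything hinges on replacing $r_\A(a^*a)$ by $r_\B(a^*a)$ via invariant spectral radius and then bounding the latter by $\|a\|_\B^2$, which is exactly what turns a representation of $\A$ into one that remains bounded for the coarser norm of $\B$ and thereby reverses the obvious restriction direction. I would be careful about two routine points: the passage to the minimal unitization when $\A$ and $\B$ are non-unital (so that $r_\A(a^*a)=r_{\A^+}(a^*a)$ and the spectral inclusion still holds), and checking that each density statement is taken in the correct norm so that $\widetilde\pi$ is well defined and unique.
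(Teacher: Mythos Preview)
The paper does not give its own proof of this proposition; it is quoted from \cite[Proposition~2.7]{SaWi} without argument, so there is nothing in the paper to compare against. Your proof is correct and is essentially the standard route: the displayed chain
\[
\|\pi(a)\|^2=\|\pi(a^*a)\|=r_{B(\mathcal H)}(\pi(a^*a))\le r_\A(a^*a)=r_\B(a^*a)\le\|a^*a\|_\B\le\|a\|_\B^2
\]
is exactly the mechanism that lets one push a $*$-representation of $\A$ down to the coarser norm, and the density/extension argument is handled carefully.

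One minor caveat worth flagging: the last inequality $\|a^*a\|_\B\le\|a\|_\B^2$ uses that the involution on $\B$ is isometric, which is not part of every author's definition of a Banach $*$-algebra. In this paper it is harmless, since every concrete Banach $*$-algebra that appears ($L^1(\G,\sigma)$, $F^p_\sharp(\G,\sigma)$, $C^*_r(\G,\sigma)$) has isometric involution by construction. If you want a version robust to merely continuous involution, you can stop at $\|\pi(a)\|^2\le r_\B(a^*a)\le\|a^*a\|_\B\le\|a^*\|_\B\|a\|_\B$ and observe that this still gives $\|\cdot\|_\B$-boundedness of $\pi|_{\mathcal S}$, which is all the extension step needs.
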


\subsection{Hermitian and quasi-hermitian Banach $*$-algebras}

Let $\A$ be a reduced 
Banach $*$-algebra. Then the following statements are equivalent:
\begin{enumerate}
	\item $\A$ is Hermitian,
	\item $\A$ is symmetric, 
	\item $\A$ is spectrally invariant in $C^* (\A)$,
	\item $r_\A(a)=r_{C^*(\A)}(a)$ for every $a\in \A$.
\end{enumerate}
(see \cite[Lemma 2.8]{SaWi} and \cite[p.\ 340]{Li12}).

\begin{defn} A dense $*$-subalgebra $\mathcal{S}$ of a Banach $*$-algebra $\A$ is called \emph{quasi-Hermitian} in $\A$ if $\Spec_\A(a)\subseteq \R$ for every $a\in \mathcal{S}_h$. We say $\mathcal{S}$ is \emph{quasi-symmetric} in $\A$ is $\Spec_\A(a^*a)\subseteq [0,\infty)$ for every $a \in \mathcal{S}$.
\end{defn}

Let $\mathcal{S}$ be a  dense $*$-subalgebra of a Banach $*$-algebra $\A$. If $\A$ is Hermitian, then $\mathcal{S}$ is automatically quasi-Hermitian in $\A$. 
The converse is not true in general, but holds whenever $\A$ is commutative \cite[Proposition 2.10]{SaWi}. 

\subsection{Spectral interpolation}

\begin{defn}
	Suppose $\A\subseteq \B\subseteq \Cb$ is a nested triple of reduced Banach $*$-algebras and $\mathcal{S}$ is a dense $*$-subalgebra of $\A$. We say that $(\A,\B,\Cb)$ is a \emph{spectral interpolation triple} relative to $\mathcal{S}$ if there exists $\theta\in (0,1)$ such that 
	$$r_\B(a)\leq r_\A(a)^{1-\theta} r_\Cb(a)^\theta$$
	for every $a\in \mathcal{S}_h$.
\end{defn}

A nice property 
of spectral interpolation triples of reduced 
Banach $*$-algebras was proved in \cite[Propostion 3.4]{SaWi}.

\begin{prop}\label{sp_int_triple}
If $(\A,\B,\Cb)$ is a spectral interpolation triple of reduced 
Banach $*$-algebras relative to a quasi-Hermitian dense $*$-subalgebra $\mathcal{S}$ of $\A$, then $\mathcal{S}_h$ has invariant spectral radius in $(\B,\Cb)$. In particular, $C^*(\B)=C^*(\Cb)$ and 
$$r_{\B}(a)=r_\Cb(a)\,,$$
for every $a\in \mathcal{S}_h$. 
\end{prop}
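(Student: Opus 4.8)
The plan is to show directly that for every $a\in\mathcal{S}_h$ an extreme point of $\Spec_\B(a)$ already lies in $\Spec_\Cb(a)$, which forces $r_\B(a)=r_\Cb(a)$, i.e.\ $\mathcal{S}_h$ has invariant spectral radius in $(\B,\Cb)$. The asserted equality $C^*(\B)=C^*(\Cb)$ then costs nothing: it is Proposition~\ref{same_C_env} applied to the nested pair $\B\subseteq\Cb$, in which $\mathcal{S}$ is dense (being dense in $\A$, hence in $\B$). Fix $a\in\mathcal{S}_h$. Quasi-Hermiticity of $\mathcal{S}$ in $\A$ gives $\Spec_\A(a)\subseteq\R$, and continuity of the inclusions gives $\Spec_\Cb(a)\subseteq\Spec_\B(a)\subseteq\Spec_\A(a)$; write $K_\Cb\subseteq K_\B\subseteq K_\A$ for these three compact subsets of $\R$. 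The whole problem has thus been transferred to a comparison of real compact sets.

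To exploit the interpolation inequality I would feed polynomials into $a$. For a real polynomial $p$ with $p(0)=0$ the element $p(a)$ lies in $\mathcal{S}_h$, and the polynomial spectral mapping theorem yields $r_\bullet(p(a))=\sup_{t\in K_\bullet}|p(t)|$ for $\bullet\in\{\A,\B,\Cb\}$. Applying the spectral interpolation inequality to $p(a)$ therefore produces
$$\sup_{t\in K_\B}|p(t)|\ \le\ \Big(\sup_{t\in K_\A}|p(t)|\Big)^{1-\theta}\Big(\sup_{t\in K_\Cb}|p(t)|\Big)^{\theta}.$$
We may assume $r_\B(a)>0$ (otherwise $r_\Cb(a)=r_\B(a)=0$), and choose $t_0\in K_\B$ with $|t_0|=r_\B(a)$, so $t_0\neq 0$. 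Suppose for contradiction that $t_0\notin K_\Cb$. Then $\{t_0\}$ and the closed set $K_\Cb\cup\{0\}$ are disjoint, so Urysohn's lemma furnishes a continuous $g\colon K_\A\to[0,1]$ with $g(t_0)=1$ and $g=0$ on $K_\Cb\cup\{0\}$; since $g(0)=0$, the Stone--Weierstrass theorem lets us approximate $g$ uniformly on $K_\A$, to within $\varepsilon$, by a real polynomial $p$ with $p(0)=0$. Feeding this $p$ into the displayed inequality gives $1-\varepsilon\le(1+\varepsilon)^{1-\theta}\varepsilon^{\theta}$, whose right-hand side tends to $0$ as $\varepsilon\to0$ --- a contradiction. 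Hence $t_0\in K_\Cb$, so $r_\Cb(a)\ge|t_0|=r_\B(a)$, and since $r_\Cb(a)\le r_\B(a)$ always, equality follows.

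The step I expect to be the real obstacle is the suppression of the intermediate spectrum $K_\A$. The naive test polynomials $p(t)=t^n$ merely reproduce the hypothesis $r_\B(a)\le r_\A(a)^{1-\theta}r_\Cb(a)^{\theta}$ and are useless once $r_\A(a)>r_\B(a)$, because the factor $\sup_{K_\A}|p|$ may be arbitrarily large; the whole point of the bump $g$ is that it stays bounded by $1$ on all of $K_\A$ while peaking at $t_0$ and vanishing on $K_\Cb$, so the offending $(1-\theta)$-power contributes nothing in the limit. The supporting facts are comparatively soft: quasi-Hermiticity is exactly what makes the spectra real, which is what legitimises the use of the classical Weierstrass/Stone--Weierstrass approximation on $\R$, and working with polynomials vanishing at $0$ (which suffices because the point $t_0$ we must detect is nonzero) keeps every $p(a)$ inside $\mathcal{S}_h$ and so sidesteps any issue with adjoining a unit.
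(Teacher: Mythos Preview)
Your proof is correct. Note that the paper does not give its own argument for this proposition; it is simply quoted from \cite[Proposition~3.4]{SaWi}, so there is no in-paper proof to compare against. Your route---pushing the interpolation inequality through the polynomial spectral mapping theorem to obtain
\[
\sup_{t\in K_\B}|p(t)|\ \le\ \bigl(\sup_{t\in K_\A}|p(t)|\bigr)^{1-\theta}\bigl(\sup_{t\in K_\Cb}|p(t)|\bigr)^{\theta}
\]
for every real polynomial $p$ with $p(0)=0$, and then killing the $K_\A$-factor with a bounded bump while driving the $K_\Cb$-factor to zero---is a clean and complete argument, and your closing paragraph correctly isolates why the naive test functions $p(t)=t^{n}$ are useless once $r_\A(a)>r_\B(a)$.

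One minor tidy-up: to guarantee an approximating polynomial with $p(0)=0$ also in the case $0\notin K_\A$ (where your phrase ``since $g(0)=0$'' is vacuous, as $g$ is only defined on $K_\A$), approximate $g$ on the compact set $K_\A\cup\{0\}$ by an arbitrary real polynomial $q$ via Weierstrass and take $p(t)=q(t)-q(0)$; since $|q(0)|=|q(0)-g(0)|\le\varepsilon$, this costs at most an extra $\varepsilon$ in each of the three estimates and the contradiction goes through unchanged.
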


	\subsection{Groupoids, twists and associated algebras}\label{sec:prelim-groupoids}
	We now introduce the Banach $*$-algebras and $C^*$-algebras associated to a groupoid as described in \cite{Ren}.  
	Given a topological groupoid $\G$ we will denote by $\G^{(0)}$ its unit space and write $r,s:\G\to \G^{(0)}$ for the continuous range and source maps, respectively. We will also denote by $\G^{(2)}=\{(\alpha,\beta)\in \G\times \G: s(\alpha)=r(\beta) \}$ the set of \emph{composable elements}. $\G^{(2)}$ inherits the subspace topology from $\G\times \G$. Given $x\in \G^{(0)}$ we define $\G_x:=\{\gamma\in \G: s(\gamma)=x\}$ and $\G^x:=\{\gamma\in \G: r(\gamma)=x\}$.
	
	Let $\lambda=\{\lambda_x\}_{x\in \Go}$ be a \emph{Haar system for $\G$},  where  $\lambda_x$ are measures with support $\G_x$ such that 
	\begin{enumerate}
	\item for every $f\in C_c(\G)$, the function $x \mapsto \int_{\G_x} f(\gamma)d\lambda_x(\gamma)$ is continuous,
	\item for every $\eta\in \G$ and $f\in C_c(\G)$ we have that 
	$$\int_{\G_{r(\eta)}} f(\gamma\eta) d\lambda_{r(\eta)}(\gamma)= \int_{\G_{s(\eta)}} f(\gamma) d\lambda_{s(\eta)}(\gamma)\,.$$
	\end{enumerate}
	If $\G$ is a locally compact groupoid, then there always exist Haar systems for $\G$.
	
	A groupoid $\G$ is called \emph{étale} if the range map, and hence also the source map, is a local homeomorphism. In this case the sets $\G^x$ and $\G_x$ are discrete sets for every $x\in \G^{(0)}$, and the Haar system consists of counting measures. A subset $B$ of an \'etale groupoid $\G$ is called a \emph{bisection} if there is an open set $U\subseteq \G$ containing $B$ such that $r\colon U\to r(U)$ and $s\colon U\to s(U)$ are homeomorphisms onto open subsets of $\Go$. Second-countable étale groupoids have countable bases consisting of open bisections.

 The \emph{orbit of $x\in \Go$} is defined to be $\text{Orb}_\G(x)=\{r(\gamma):\gamma\in \G_x \}$.	The \emph{isotropy group of $x$} is given by $\G_x^x:=\G^x\cap \G_x=\{\gamma\in \G: s(\gamma)=r(\gamma)=x\}$, and the \emph{isotropy subgroupoid of $\G$} is the subgroupoid $\Iso(\G):=\bigcup_{x\in \G^{(0)}}\G_x^x$ with the relative topology from $\G$. 
	
	We will consider groupoid twists where the twist is implemented by a normalized continuous $2$-cocycle. To be more precise, let $\G$ be a  locally compact   groupoid. A \emph{normalized continuous $2$-cocycle} is then a continuous map $\sigma \colon \G^{(2)} \to \T$ satisfying
	\begin{equation} \label{eq:2-cocycle-unit-condition}
	\sigma (r(\gamma),\gamma) = 1 = \sigma (\gamma, s(\gamma))
	\end{equation}
	for all $\gamma \in \G$, and
	\begin{equation} \label{eq:2-cocycle-associativity-condition}
	\sigma (\alpha,\beta) \sigma (\alpha\beta,\gamma) = \sigma(\beta,\gamma) \sigma(\alpha,\beta\gamma)
	\end{equation}
	whenever $(\alpha,\beta),(\beta,\gamma)\in \G^{(2)}$. The set of normalized continuous $2$-cocycles on $\G$ will be denoted $Z^2 (\G,\T)$. Note that this is not the most general notion of a twist of a groupoid (see \cite[Chapter 5]{Sims}). 
	
	Let $\G$ be a locally compact groupoid with Haar system $\lambda$. We will define the $\sigma$-twisted convolution algebra $C_c (\G,\sigma)$ as follows: As a set it is just 
	$$C_c (\G,\sigma)=\{f:\G\to \C: f \text{ is continuous with compact support}\},$$ 
	but equipped with $\sigma$-twisted convolution product
	\begin{equation}\label{eq:twisted-conv}
	(f \star_\sigma g) (\gamma) = 
	\int_{\G_{s(\gamma)}} f(\gamma\mu^{-1}) g(\mu) \sigma (\gamma\mu^{-1},\mu)\, d\lambda_{s(\gamma)}(\mu), \quad \text{$f,g\in C_c (\G,\sigma)$, $\gamma \in \G$,}
	\end{equation}
	and $\sigma$-twisted involution
	\begin{equation}\label{eq:twisted-inv}
	f^{*_\sigma}(\gamma) = \overline{\sigma(\gamma^{-1},\gamma)} \overline{f(\gamma^{-1})}, \quad \text{$f \in C_c (\G,\sigma)$, $\gamma \in \G$.}
	\end{equation}
	We complete $C_c (\G,\sigma)$ in the "fiberwise $1$-norm'', also known as the $I$-norm, given by
	\begin{align}\label{eq:I-norm}
	\Vert f \Vert_I  & = \sup_{x \in \Go}\left\lbrace \max \left\lbrace  \int_{\G_x} \vert f(\gamma)\vert \,d\lambda_x (\gamma)  , \int_{ \G_{x}} \vert f(\gamma^{-1})\vert\,d\lambda_x (\gamma)  \right\rbrace \right\rbrace  \\
	 & = \sup_{x\in \Go}\left\lbrace \max \left\lbrace  \int_{\G_x} \vert f(\gamma)\vert \,d\lambda_x (\gamma) , \int_{ \G_{x}} \vert f^{*_\sigma}(\gamma)\vert\,d\lambda_x (\gamma)   \right\rbrace \right\rbrace
	\end{align}
	for $f \in C_c (\G,\sigma)$. Denote by $L^1 (\G,\sigma,\lambda)$ (or  $L^1(\G,\sigma)$ when there is no ambiguity on the Haar system) the completion of $C_c (\G,\sigma)$ with respect to the $I$-norm.  If $\G$ is an étale groupoid, then we denote it by $\ell^1 (\G,\sigma)$. 

	The \emph{(full) twisted groupoid $C^*$-algebra} $C^*(\G,\sigma,\lambda)$ (or  $C^*(\G,\sigma)$ when there is no ambiguity on the Haar system) is the completion of $C_c (\G,\sigma)$ in the norm
	\begin{equation*}
	\Vert f \Vert := \sup \{ \Vert \pi (f) \Vert : \text{$\pi$ is an $I$-norm bounded $*$-representation of $C_c (\G,\sigma)$}\},
	\end{equation*}
	for $f \in C_c (\G,\sigma)$.
	It was observed in \cite[Lemma 3.3.19]{Armstrong} that if $\G$ is a locally compact Hausdorff étale groupoid, then every $*$-representation of $C_c(\G,\sigma)$ is bounded by the $I$-norm. In addition, if $\G$ is a transformation groupoid (see Example \ref{example_trans}), every $*$-representation is bounded by the $I$-norm. In these cases, since we are completing with respect to a supremum over $*$-representations, $C^*(\G,\sigma)$ is just the $C^*$-envelope of $L^1 (\G,\sigma)$. 
	
	Now we will construct a faithful $*$-representation of $L^1(\G,\sigma)$  called the \emph{$\sigma$-twisted left regular representation}. In particular, we have that $L^1(\G,\sigma)$ is reduced. 
	The completion of the image of $L^1(\G,\sigma)$ under the $\sigma$-twisted left regular representation is called the \emph{$\sigma$-twisted reduced groupoid $C^*$-algebra of $\G$} and will be denoted $C^*_r (\G,\sigma,\lambda)$ (or  $C^*_r(\G,\sigma)$ when there is no ambiguity on the Haar system). Let $x \in \Go$. Then there is a representation $L^{\sigma,2}_x \colon C_c (\G,\sigma) \to B(L^2 (\G_x))$ (here $L^2 (\G_x)=L^2 (\G_x,\lambda_x)$) which is given by
	\begin{equation}\label{eq:2-twisted-left-reg-rep}
	\left( L^{\sigma,2}_x (f) \xi\right) (\gamma) = \int_{\G_x} \sigma (\gamma\mu^{-1},\mu)f(\gamma\mu^{-1}) \xi(\mu) \,d\lambda_{s(\gamma)}(\mu)
	\end{equation}
	for $f \in C_c (\G,\sigma)$, $\xi\in L^2(\G_x)$ and $\gamma \in \G_x$. 
		
	We then obtain a faithful $I$-norm bounded $*$-representation of $C_c (\G,\sigma)$ given by
	\begin{equation*}
	\bigoplus_{x\in \Go} L^{\sigma,2}_x \colon C_c(\G,\sigma) \to \bigoplus_{x\in \Go} B(L^2 (\G_x)) \subset B(\bigoplus_{x\in \Go} L^2 (\G_x)).
	\end{equation*}
	$C^*_r (\G,\sigma)$ is then the completion of $C_c(\G,\sigma)$ with respect of the image of  $C_c (\G,\sigma)$ under the $\sigma$-twisted left regular representation, so given $f\in C_c (\G,\sigma)$
	$$\|f\|_{r,2}:=\sup_{x\in \Go}\{\|L^{\sigma,2}_x(f) \|_{B(L^2 (\G_x))} \}\,.$$
	As the representation is $I$-norm bounded, it extends to a $*$-representation of $L^1(\G,\sigma)$, and $C^*_r(\G,\sigma)$ is also the $C^*$-completion of $L^1 (\G,\sigma)$ in the extended $*$-representation. 
	Moreover, 
	since $C^*(\G,\sigma)$ is the  completion of $C_c(\G,\sigma)$ with respect to the supremum of the $I$-bounded norms, the identity map on $C_c (\G,\sigma)$ extends to a (surjective) $*$-homomorphism $\pi:C^*(\G,\sigma)\to C^*_r(\G,\sigma)$. 
	

	\begin{defn}
		Let $\G$ be a  locally compact  groupoid  with Haar system $\lambda$ and let $\sigma \in Z^2(G,\T)$. We say that $\G$ twisted by $\sigma$ has the \emph{weak containment property} when the natural map $\pi \colon C^* (\G,\sigma)\to C^*_r (\G,\sigma)$ is an isomorphism.
	\end{defn}
	
	If $\G$ is an amenable groupoid  with Haar measure $\lambda$ \cite{AnDRen}, we have that  $C^*_r (\G,\sigma,\lambda) = C^* (\G,\sigma,\lambda)$ for every $\sigma \in Z^2 (\G, \T)$ \cite[Proposition 6.1.8]{AnDRen}, and hence $\G$ twisted by $\sigma$ has the  weak containment property for every $\sigma \in Z^2 (\G, \T)$. In \cite{Will} it was proved that amenability is not equivalent to having the weak containment property.  
	
	 \begin{rmk}
	 It was shown in \cite{Ren} that the full and reduced $C^*$-algebras don't depend, up to Morita equivalence, on the Haar system. Suppose $\lambda,\lambda'$ are two Haar systems for a locally compact groupoid $\G$, let $\sigma \in Z^2(G,\T)$, and suppose $C^*_r(\G,\sigma,\lambda)=C^*(\G,\sigma,\lambda)$. Then $C^*_r(\G,\sigma,\lambda)$ and $C^*_r(\G,\sigma,\lambda')$ are Morita equivalent, as are $C^*(\G,\sigma,\lambda)$ and $C^*(\G,\sigma,\lambda')$. However, it is not known to the authors if this also implies $C^*_r(\G,\sigma,\lambda')=C^*(\G,\sigma,\lambda')$, that is, if weak containment is independent of the Haar system.

 	 \end{rmk}

	\begin{exmp}\label{example_trans}
	Let $\Gamma$ be a locally compact  group with left Haar measure $\lambda$ and with unit $e$, and let us consider an action of $\Gamma$ by homeomorphisms on a locally compact Hausdorff space $X$. Then we define the \emph{transformation groupoid} $X\rtimes \Gamma$ as the set $X\times \Gamma$ with the product topology, such that 
		$$s(x,\gamma)=(x,e)\,,\qquad r(x,\gamma)=(\gamma\cdot x,e)\qquad\text{and}\qquad (\gamma_1\cdot x,\gamma_2)(x,\gamma_1)=(x,\gamma_2\gamma_1)\,.$$
		Then $X\rtimes \Gamma$ is a locally compact groupoid. Moreover, if $X$ and $\Gamma$ are both second-countable, then so is $X\rtimes \Gamma$.
	One defines the Haar system $\{\delta_x\times \lambda\}_{x\in \Go}$ where $\delta_x$ is the Dirac measure, and in this case, given $f\in C_c(X\rtimes \Gamma)$ we have that 
		\begin{align*} \|f\|_I & =\sup\left\lbrace  \max \left\lbrace \int_{\Gamma} |f(x,\gamma)|\,d\lambda(\gamma)\,,\int_{\Gamma} |f(\gamma\cdot x,\gamma^{-1})|\,d\lambda(\gamma)\right\rbrace : x\in \Go  \right\rbrace  \\
		& =\sup\left\lbrace    \int_{\Gamma} |f(x,\gamma)|\,d\lambda(\gamma)  : x\in \Go  \right\rbrace 
		 \,.\end{align*}
	
	Now let $\sigma\in Z^2 (\Gamma,\T)$. Then we can extend $\sigma$ to a $2$-cocycle of $X\rtimes \Gamma$, also denoted by $\sigma$, by defining
		$$\sigma((x_1,\gamma_1),(x_2,\gamma_2)):=\sigma(\gamma_1,\gamma_2)\,,$$
		for all $x_1,x_2\in X$ and $\gamma_1,\gamma_2\in \Gamma$ for which $((x_1,\gamma_1), (x_2, \gamma_2)) \in (X\rtimes \Gamma)^{(2)}$. 
	Then $$C^*(L^1(X\rtimes \Gamma,\sigma))\cong C^*(X\rtimes \Gamma,\sigma)\cong C_0(X)\rtimes^\sigma \Gamma$$ is the full twisted crossed product $C^*$-algebra, and $$C^*_r(X\rtimes \Gamma,\sigma)\cong C_0(X)\rtimes^\sigma_r\Gamma$$ is the reduced twisted crossed product $C^*$-algebra. 
	\end{exmp}

	\section{Quasi-Hermitian groupoids have the weak containment property.}\label{quasi-her-sec}
	
	In this section we prove the main result of the paper, namely that if $L^1(\G,\sigma,\lambda)$ is Hermitian, then $C^*(\G,\sigma,\lambda)=C^*_r(\G,\sigma,\lambda)$. As a consequence, we also prove that if $L^1(\G,\sigma,\lambda)$ is Hermitian, then $L^1(\G,\sigma,\lambda)$
 is spectrally invariant in $C^*_r(\G,\sigma,\lambda)$.	The general strategy is to follow \cite[Section 4]{SaWi}, but not all the steps trivially extend to our situation.

	\begin{defn}
Let $\G$ be a locally compact groupoid with Haar system $\lambda$ and let $\sigma \in Z^2 (\G,\T)$. We say that $\G$  is \emph{$\sigma$-quasi-Hermitian} (resp.\ \emph{$\sigma$-quasi-symmetric}) if $C_c(\G,\sigma)$ is quasi-Hermitian (resp.\ quasi-symmetric) in $L^1(\G,\sigma,\lambda)$. 
	\end{defn}

	\begin{prop}\label{prop:quasi_hermitian_implies_quasi_symmetric} Let $\G$ be a  locally compact   groupoid with Haar system $\lambda$ and $\sigma \in Z^2 (\G,\T)$. If $\G$ is $\sigma$-quasi-symmetric, then $\G$ is $\sigma$-quasi-Hermitian.   
	\end{prop}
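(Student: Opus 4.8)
The plan is to establish the relative version of the elementary implication ``symmetric $\Rightarrow$ Hermitian'', working entirely inside the nested pair $\mathcal{S}:=C_c(\G,\sigma)\subseteq\A:=L^1(\G,\sigma,\lambda)$. The key observation that makes this painless is that quasi-Hermiticity is only a statement about self-adjoint elements $b\in\mathcal{S}_h$, and for such $b$ one has $b^{*_\sigma}\star_\sigma b=b\star_\sigma b$. Thus a single instance of the quasi-symmetry hypothesis, applied to $a=b$, already delivers all the positivity we need, and at no point do we have to feed a non-self-adjoint element, or an element of the unitization, into the quasi-symmetry assumption.

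Concretely, I would fix $b\in\mathcal{S}_h$ and apply quasi-symmetry with $a=b$ to get $\Spec_\A(b\star_\sigma b)=\Spec_\A(b^{*_\sigma}\star_\sigma b)\subseteq[0,\infty)$. I would then invoke the polynomial spectral mapping theorem for $p(z)=z^2$. Since $\A$ is in general non-unital, this must be read in the minimal unitization $\A^+$, where by the convention of Section~\ref{sec:prelims} one has $\Spec_\A(b)=\Spec_{\A^+}(b)$ and $\Spec_\A(b\star_\sigma b)=\Spec_{\A^+}(b\star_\sigma b)$; spectral mapping then yields $\Spec_{\A^+}(b\star_\sigma b)=\{\lambda^2:\lambda\in\Spec_{\A^+}(b)\}$.

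Combining the two displays, every $\lambda\in\Spec_\A(b)$ satisfies $\lambda^2\in[0,\infty)$. Writing $\lambda=\alpha+i\beta$ gives $\lambda^2=(\alpha^2-\beta^2)+2i\alpha\beta$, so $\alpha\beta=0$ and $\alpha^2-\beta^2\geq0$; the case $\alpha=0$, $\beta\neq0$ would force $\lambda^2=-\beta^2<0$, a contradiction, so $\beta=0$ and $\lambda\in\R$. Hence $\Spec_\A(b)\subseteq\R$ for every $b\in\mathcal{S}_h$, which is exactly the statement that $C_c(\G,\sigma)$ is quasi-Hermitian in $L^1(\G,\sigma,\lambda)$, i.e.\ that $\G$ is $\sigma$-quasi-Hermitian.

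I do not anticipate a genuine obstacle: this is the easy half of Shirali--Ford, and it localizes cleanly to the pair $(\mathcal{S},\A)$ precisely because the hypothesis is only ever invoked at $b$ itself. The one point deserving care is that the spectral mapping theorem is applied in $\A^+$ rather than in $\A$ --- the groupoid $L^1$-algebra being non-unital in general --- together with the routine check that $\Spec_\A$ and $\Spec_{\A^+}$ coincide on the elements in question. In contrast to the deep converse direction (Hermitian $\Rightarrow$ symmetric), no square-root lemma or resolvent estimate is required.
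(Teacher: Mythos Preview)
Your proof is correct and follows essentially the same argument as the paper: take $f\in C_c(\G,\sigma)_h$, use $f^{*_\sigma}\star_\sigma f=f\star_\sigma f$, apply the quasi-symmetry hypothesis, and invoke the polynomial spectral mapping theorem for $z\mapsto z^2$ to conclude $\Spec_{L^1(\G,\sigma)}(f)\subseteq\R$. Your additional care about passing to the unitization $\A^+$ and the explicit verification that $\lambda^2\in[0,\infty)$ forces $\lambda\in\R$ are welcome details that the paper leaves implicit.
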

\begin{proof} 	The proof of \cite[Proposition 4.1]{SaWi} adapts trivially to give us the following result. Let $f\in C_c(\G,\sigma)_h$, then by assumption $\Spec_{L^1(\G,\sigma)}(f\star_\sigma f^{*_\sigma})\subseteq [0,\infty)$. Therefore
	$$\{\lambda^2:\lambda\in \Spec_{L^1(\G,\sigma)}(f) \}\subseteq \Spec_{L^1(\G,\sigma)}(f\star_\sigma f)=\Spec_{L^1(\G,\sigma)}(f\star_\sigma f^{*_\sigma})\subseteq [0,\infty)\,.$$
	Hence $\Spec_{L^1(\G,\sigma)}(f)\subseteq \R$ for every $f\in C_c(\G,\sigma)_h$.
	\end{proof}

	
	Let $\G$ be a  locally compact groupoid with Haar system $\lambda$, $\sigma \in Z^2 (\G,\T)$ and $1\leq p\leq \infty$. Now fix $x\in \Go$, so we define the representation $L^{\sigma, p}_x:C_c(\G,\sigma)\to B(L^p(\G_x))$  by
	\begin{equation*}
	L^{\sigma,p}_x (f) \xi(\gamma) = \int_{ \G_{x}} \sigma (\gamma\mu^{-1},\mu)f(\gamma\mu^{-1}) \xi(\mu)d\lambda_{x}(\mu),
	\end{equation*}
	for every  $f\in C_c(\G,\sigma)$ and $\gamma \in \G_x$. For $p=2$ this is just \eqref{eq:2-twisted-left-reg-rep}.  
	
	Then we define the \emph{$L^p$-reduced $\sigma$-representation of $\G$} as
	$$L^{\sigma,p}:=\bigoplus_{x\in \Go} L^{\sigma,p}_x:C_c(\G,\sigma)\to \bigoplus_{x\in \Go}B(L^p(\G_x))\,.$$
	
	The following lemma 
	is a straightforward modification of \cite[Lemma 4.6]{ChGaTh} to the situation of general $L^p$-spaces.
	
	\begin{lemma} Let $\G$ be a second-countable locally compact groupoid with Haar system $\lambda$ and  $1\leq p\leq \infty$. Then
\begin{equation}\label{eq_norm}\|f\|_\infty:=\sup_{\gamma\in \G}\{|f(\gamma)|\}\leq\|L^{\sigma,p}(f)\|=\sup_{x\in \Go}\{\|L^{\sigma,p}_x (f)\|_{B(L^p(\G_x))}\}\leq\|f\|_I\,,
	\end{equation}
	for every  $f\in C_c(\G,\sigma)$. 
	\end{lemma}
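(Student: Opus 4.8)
The plan is to prove the two inequalities separately, since they are of completely different character: the right-hand bound $\|L^{\sigma,p}(f)\|\le\|f\|_I$ is a Schur-type estimate that works for every $1\le p\le\infty$ and every locally compact $\G$, whereas the left-hand bound $\|f\|_\infty\le\|L^{\sigma,p}(f)\|$ comes from testing the operator against a function concentrated at a unit. Throughout I would use that $|\sigma|\equiv 1$, so that the integral kernel of $L^{\sigma,p}_x(f)$ has modulus $|f(\gamma\mu^{-1})|$.

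For the upper bound, fix $x\in\Go$ and let $q$ be the conjugate exponent, $1/p+1/q=1$. I would bound the two one-variable integrals of this kernel by the two halves of the $I$-norm, using the left-invariance of the Haar system (axiom (2) of Section \ref{sec:prelim-groupoids}). First, for fixed $\mu$, taking $\eta=\mu^{-1}$ gives $\int_{\G_x}|f(\gamma\mu^{-1})|\,d\lambda_x(\gamma)=\int_{\G_{r(\mu)}}|f(\gamma)|\,d\lambda_{r(\mu)}(\gamma)\le\|f\|_I$. Second, for fixed $\gamma$, writing $|f(\gamma\mu^{-1})|=|f((\mu\gamma^{-1})^{-1})|$ and taking $\eta=\gamma^{-1}$ gives $\int_{\G_x}|f(\gamma\mu^{-1})|\,d\lambda_x(\mu)=\int_{\G_{r(\gamma)}}|f(\mu^{-1})|\,d\lambda_{r(\gamma)}(\mu)\le\|f\|_I$, now controlled by the inversion part of the $I$-norm. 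The Schur test --- equivalently, Hölder's inequality in $\mu$ with exponents $q,p$ followed by Fubini in $\gamma$ --- then yields $\|L^{\sigma,p}_x(f)\|\le\|f\|_I$ uniformly in $x$, with $p=1$ and $p=\infty$ as the degenerate endpoints checked directly. Taking the supremum over $x\in\Go$ gives the right-hand inequality.

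For the lower bound, fix $\gamma_0\in\G$ and set $x=s(\gamma_0)$, so that $\gamma_0\in\G_x$ and the unit $x$ itself lies in $\G_x$. I would test $L^{\sigma,p}_x(f)$ against the point mass $\delta_x$ at the unit. Using $x^{-1}=x$, the identity $\gamma x=\gamma$ for $\gamma\in\G_x$, and the normalization $\sigma(\gamma,s(\gamma))=1$ from \eqref{eq:2-cocycle-unit-condition}, one computes $(L^{\sigma,p}_x(f)\delta_x)(\gamma)=f(\gamma)$; that is, the operator sends the unit vector $\delta_x$ to the restriction $f|_{\G_x}$. Hence $\|L^{\sigma,p}_x(f)\|\ge\|f|_{\G_x}\|_p\ge|f(\gamma_0)|$, and taking the supremum over $\gamma_0$ gives $\|f\|_\infty\le\|L^{\sigma,p}(f)\|$.

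The hard part will be the lower bound. The clean argument above presupposes that $\delta_x$ is a genuine unit vector in $L^p(\G_x)$, which holds precisely when the fibers $\G_x$ carry counting measure, i.e.\ when $\G$ is étale. For non-discrete fibers a point mass is unavailable and one must replace $\delta_x$ by an approximate identity; the delicate point is to keep its $L^p$-norm under control while still resolving the value $f(\gamma_0)$, and this is exactly where the discreteness hypothesis enters the modification of \cite[Lemma 4.6]{ChGaTh}. I would therefore expect the fiberwise concentration step, rather than the Schur estimate, to be the genuine obstacle.
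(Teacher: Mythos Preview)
The paper offers no proof of its own here; it simply defers to \cite[Lemma~4.6]{ChGaTh} and calls the extension ``straightforward''. Your Schur-test argument for the upper bound $\|L^{\sigma,p}(f)\|\le\|f\|_I$ is correct and is the standard one, and your $\delta_x$ argument for the lower bound is correct when $\G$ is \'etale.

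Your hesitation about the non-\'etale case is not merely well-placed --- it is decisive. The lower inequality $\|f\|_\infty\le\|L^{\sigma,p}(f)\|$ is \emph{false} for general locally compact groupoids, so no approximate-identity trick will salvage it. Take $\G=\R$ regarded as a group (one unit), trivial cocycle, and $p=2$: then $L^{\sigma,2}_e(f)$ is convolution by $f$ on $L^2(\R)$, with operator norm $\|\hat f\|_\infty$. For $f(t)=e^{-\pi a t^2}$ with $a>1$ one gets $\|f\|_\infty=1$ but $\|\hat f\|_\infty=a^{-1/2}<1$. The tension you spotted between $\|\xi\|_1$ and $\|\xi\|_p$ as $\xi$ concentrates near the unit is exactly what breaks the argument, and it reflects a genuine failure of the inequality rather than a missing technique.

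Fortunately this does not damage the paper: only the right-hand inequality is ever invoked (in Propositions~\ref{prop:convolution_algebra} and~\ref{prop:interpolation_triple}), and your proof of that half stands. The lower bound should carry an \'etale hypothesis, under which your $\delta_x$ argument works verbatim.
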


	\begin{defn} Let $\G$ be a second-countable locally compact   groupoid with Haar system $\lambda$,  $\sigma \in Z^2 (\G,\T)$ and $1\leq p\leq \infty$. The \emph{reduced groupoid $L^p$-Banach algebra}, denoted by $F^p(\G,\sigma,\lambda)$, is the completion of $C_c(\G,\sigma)$ with respect to the norm 
		$$\|f\|_{r,p}:=
		\sup_{x\in \Go}\{\|L^{\sigma,p}_x(f)\|_{B(L^p(\G_x))}\}$$
		for all $f\in C_c(\G,\sigma)$. We will denote $F^p(\G,\sigma,\lambda)$ by $ F^p(\G,\sigma)$ when there is no ambiguity on the Haar system $\lambda$.  
	\end{defn}
	Let  $1\leq p\leq  q\leq \infty$ with $1=\frac{1}{p}+\frac{1}{q}$. Then, given any $x\in \Go$,  there is a duality relation $(L^p(\G_x))^*\cong L^q(\G_x)$ given by 
	$$\langle \xi,\zeta\rangle:=\int_{\G_x}\xi(\gamma) \overline{\zeta(\gamma)}\, d\lambda_{x}\,,$$
	for $\xi\in L^p(\G_x)$ and $\zeta\in L^q(\G_x)$.

	\begin{lemma}\label{dual}
Let $\G$ be a second-countable locally compact   groupoid with Haar system $\lambda$, let $\sigma \in Z^2 (\G,\T)$, and let $1\leq p, q\leq \infty$ be such that  $1=\frac{1}{p}+\frac{1}{q}$. Then 
$$\langle L^{\sigma,p}_x(f^{*_\sigma})\xi,\zeta\rangle =\langle \xi, L^{\sigma,q}_x(f) \zeta\rangle\,,$$
for every $x\in \G^{(0)}$, $f\in L^1(\G,\sigma,\lambda)$, $\xi\in L^p(\G_x,\lambda_x)$ and $\zeta\in L^q(\G_x,\lambda_x)$.
	\end{lemma}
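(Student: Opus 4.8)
The plan is to prove the identity first for $f$ in the dense subalgebra $C_c(\G,\sigma)$ and then pass to general $f\in L^1(\G,\sigma,\lambda)$ by continuity. Indeed, by the bound \eqref{eq_norm} both maps $g\mapsto L^{\sigma,p}_x(g)$ and $g\mapsto L^{\sigma,q}_x(g)$ are $I$-norm continuous from $C_c(\G,\sigma)$ into $B(L^p(\G_x))$ and $B(L^q(\G_x))$ respectively, and so extend to $L^1(\G,\sigma,\lambda)$; moreover $g\mapsto g^{*_\sigma}$ is an $I$-norm isometry by the second expression in \eqref{eq:I-norm}. Since the pairing $\langle\cdot,\cdot\rangle$ is a bounded sesquilinear form on $L^p(\G_x)\times L^q(\G_x)$ by H\"older's inequality, both sides of the asserted equality are continuous in $f$ for the $I$-norm with $\xi,\zeta$ fixed. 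As $C_c(\G,\sigma)$ is dense in $L^1(\G,\sigma,\lambda)$, it therefore suffices to treat $f\in C_c(\G,\sigma)$.

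For such $f$ I would insert the defining formula for $L^{\sigma,p}_x$ together with the involution \eqref{eq:twisted-inv}, using $(\gamma\mu^{-1})^{-1}=\mu\gamma^{-1}$, to write the left-hand side as
$$\int_{\G_x}\int_{\G_x}\sigma(\gamma\mu^{-1},\mu)\,\overline{\sigma(\mu\gamma^{-1},\gamma\mu^{-1})}\,\overline{f(\mu\gamma^{-1})}\,\xi(\mu)\,\overline{\zeta(\gamma)}\,d\lambda_x(\mu)\,d\lambda_x(\gamma),$$
while expanding the right-hand side from the defining formula for $L^{\sigma,q}_x$ and the conjugate-linearity of the pairing in its second slot gives
$$\int_{\G_x}\int_{\G_x}\overline{\sigma(\mu\gamma^{-1},\gamma)}\,\overline{f(\mu\gamma^{-1})}\,\xi(\mu)\,\overline{\zeta(\gamma)}\,d\lambda_x(\gamma)\,d\lambda_x(\mu).$$
Since $|\sigma|\equiv 1$ and $f$ has compact support, both double integrals are absolutely convergent (by H\"older's inequality and \eqref{eq_norm} applied to $|f|$), so Fubini's theorem lets me interchange the order of integration. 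The proof then reduces to the pointwise identity $\sigma(\gamma\mu^{-1},\mu)\,\overline{\sigma(\mu\gamma^{-1},\gamma\mu^{-1})}=\overline{\sigma(\mu\gamma^{-1},\gamma)}$ for all $\gamma,\mu\in\G_x$.

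The crux is this cocycle computation, which I would obtain by applying the associativity relation \eqref{eq:2-cocycle-associativity-condition} to the triple $(\alpha,\beta,\delta)=(\mu\gamma^{-1},\gamma\mu^{-1},\mu)$. One checks from $s(\gamma)=s(\mu)=x$ that $(\alpha,\beta)$ and $(\beta,\delta)$ are composable, that $\alpha\beta=\mu\gamma^{-1}\gamma\mu^{-1}=r(\mu)$ and $\beta\delta=\gamma\mu^{-1}\mu=\gamma$, so that \eqref{eq:2-cocycle-associativity-condition} reads
$$\sigma(\mu\gamma^{-1},\gamma\mu^{-1})\,\sigma(r(\mu),\mu)=\sigma(\gamma\mu^{-1},\mu)\,\sigma(\mu\gamma^{-1},\gamma).$$
The normalization \eqref{eq:2-cocycle-unit-condition} gives $\sigma(r(\mu),\mu)=1$, and since $\sigma$ is $\T$-valued, rearranging yields exactly the required pointwise identity. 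I expect the only genuine subtlety to be bookkeeping: tracking the inverses $(\gamma\mu^{-1})^{-1}=\mu\gamma^{-1}$, the conjugations coming from both \eqref{eq:twisted-inv} and the second slot of the pairing, and above all isolating the correct composable triple for \eqref{eq:2-cocycle-associativity-condition}; the measure-theoretic steps (the density reduction to $C_c(\G,\sigma)$ and the Fubini interchange) are routine.
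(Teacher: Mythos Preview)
Your proof is correct and follows essentially the same route as the paper: both expand the pairing using the definitions of $L^{\sigma,p}_x$ and the twisted involution, apply Fubini, and reduce to the same cocycle identity $\sigma(\mu\gamma^{-1},\gamma\mu^{-1})=\sigma(\mu\gamma^{-1},\gamma)\,\sigma(\gamma\mu^{-1},\mu)$, which is obtained from \eqref{eq:2-cocycle-associativity-condition} with the identical triple $(\mu\gamma^{-1},\gamma\mu^{-1},\mu)$ together with \eqref{eq:2-cocycle-unit-condition}. Your additional density reduction to $C_c(\G,\sigma)$ is a welcome bit of care that the paper omits, but otherwise the arguments coincide.
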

	
	\begin{proof}
Fix  $x\in \G^{(0)}$, $f\in L^1(\G,\sigma)$, $\xi\in L^p(\G_x)$ and $\zeta\in L^q(\G_x)$. Then
	\begin{align*}
	\langle L^{\sigma,p}_x &(f^{*_\sigma})\xi,\zeta\rangle = 
	\int_{\G_x}\left(  \int_{\G_x} \sigma(\gamma\mu^{-1}, \mu)  f^{*_\sigma}(\gamma \mu^{-1}) \xi(\mu)d\lambda_x(\mu)\right)  \overline{\zeta(\gamma)} \,d\lambda_x(\gamma) \\
	&= 
		\int_{\G_x}\left(  \int_{\G_x} \sigma(\gamma\mu^{-1}, \mu)  \overline{\sigma((\gamma \mu^{-1})^{-1}, \gamma \mu^{-1})} \overline{f((\gamma \mu^{-1})^{-1})} \xi(\mu)d\lambda_x(\mu)\right)  \overline{\zeta(\gamma)} \,d\lambda_x(\gamma) \\
	&= \int_{\G_x} \xi(\mu) \left( \int_{\G_x}\overline{  \overline{\sigma(\gamma\mu^{-1}, \mu)} \sigma((\gamma \mu^{-1})^{-1}, \gamma \mu^{-1}) f(\mu \gamma^{-1}) \zeta(\gamma)  }\,d\lambda_x(\gamma)\right)\,d\lambda_x(\mu)  \\
	&= \int_{\G_x} \xi(\mu) \int_{\G_x} \left( \overline{\sigma (\mu \gamma^{-1}, \gamma) f(\mu \gamma^{-1}) \zeta(\gamma)  }\, d\lambda_x(\gamma)\right)\,d\lambda_x(\mu)  \\
	&= \int_{\G_x} \xi(\mu) \overline{L^{\sigma,q}_x (f) \zeta(\mu)}\,d\lambda_x(\mu) = \langle \xi, L^{\sigma,q}_x(f) \zeta\rangle.
	\end{align*} 
	Here we used that $\overline{\sigma(\gamma\mu^{-1}, \mu)} \sigma((\gamma \mu^{-1})^{-1}, \gamma \mu^{-1})  = \sigma (\mu \gamma^{-1}, \gamma) $, or equivalently, that $$\sigma(\mu\gamma^{-1},\gamma\mu^{-1})=\sigma(\mu\gamma^{-1},\gamma)\sigma(\gamma\mu^{-1},\mu)$$ for every $\mu,\gamma \in \G_x$.  
	We get this from \eqref{eq:2-cocycle-associativity-condition} using $\mu \gamma^{-1}$, $(\mu \gamma^{-1})^{-1}$, $\mu$ instead of $\alpha$, $\beta$, $\gamma$, and then applying \eqref{eq:2-cocycle-unit-condition}.
	\end{proof}

	\begin{prop}\label{prop:convolution_algebra}
		Let $\G$ be a second-countable locally compact groupoid with Haar system $\lambda$ and $\sigma \in Z^2 (\G,\T)$, and suppose $1\leq p,q\leq \infty$ with $1=\frac{1}{p}+\frac{1}{q}$. The algebra $L^1(\G,\sigma,\lambda)$ is a normed $*$-algebra with norm 
		$$\|f\|_{\sharp,p}:=\max\{ \|f\|_{r,p},\, \|f\|_{r,q} \}\,,$$
		for $f\in C_c(\G,\sigma)$, with the convolution and involution in $L^1(\G,\sigma,\lambda)$. 
	\end{prop}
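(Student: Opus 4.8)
The plan is to verify the three defining properties of a normed $*$-algebra on the dense subalgebra $C_c(\G,\sigma)$ and then transport them to $L^1(\G,\sigma,\lambda)$ by density. The observation making the extension harmless is that \eqref{eq_norm} gives $\|f\|_{r,p}\le\|f\|_I$ and $\|f\|_{r,q}\le\|f\|_I$, hence $\|f\|_{\sharp,p}\le\|f\|_I$ for all $f\in C_c(\G,\sigma)$; thus every algebraic property established for $(C_c(\G,\sigma),\|\cdot\|_{\sharp,p})$ persists on the $I$-norm completion $L^1(\G,\sigma,\lambda)$ once $\|\cdot\|_{\sharp,p}$ is known to remain definite there. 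So it suffices to check on $C_c(\G,\sigma)$ that $\|\cdot\|_{\sharp,p}$ is a definite, submultiplicative norm satisfying $\|f^{*_\sigma}\|_{\sharp,p}=\|f\|_{\sharp,p}$.

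First I would record that each of $\|\cdot\|_{r,p}$ and $\|\cdot\|_{r,q}$ is already a submultiplicative norm. Definiteness is immediate from \eqref{eq_norm}, since $\|f\|_\infty\le\|f\|_{r,p}$ forces $f=0$ whenever $\|f\|_{r,p}=0$. Submultiplicativity holds because each $L^{\sigma,p}_x$ is an algebra homomorphism for $\star_\sigma$, so $L^{\sigma,p}_x(f\star_\sigma g)=L^{\sigma,p}_x(f)L^{\sigma,p}_x(g)$; taking operator norms and then the supremum over $x\in\Go$ yields $\|f\star_\sigma g\|_{r,p}\le\|f\|_{r,p}\|g\|_{r,p}$. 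Since $\|\cdot\|_{\sharp,p}$ is the pointwise maximum of two norms it is again a norm, and submultiplicativity passes to the maximum through the elementary inequality $\max\{a_1b_1,a_2b_2\}\le\max\{a_1,a_2\}\max\{b_1,b_2\}$ for nonnegative reals, applied with $a_1=\|f\|_{r,p}$, $a_2=\|f\|_{r,q}$, $b_1=\|g\|_{r,p}$, $b_2=\|g\|_{r,q}$.

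The substance of the argument is the isometry of the involution, which is exactly what Lemma~\ref{dual} supplies. That lemma identifies $L^{\sigma,p}_x(f^{*_\sigma})$ as the Banach-space adjoint of $L^{\sigma,q}_x(f)$ relative to the pairing $\langle\xi,\zeta\rangle=\int_{\G_x}\xi\overline{\zeta}\,d\lambda_x$. Using that $\|\eta\|_r=\sup\{|\langle\eta,\zeta\rangle|:\|\zeta\|_{r'}\le1\}$ holds for every conjugate pair $1\le r,r'\le\infty$ (a direct consequence of the integral pairing that needs no reflexivity and so is valid at the endpoints), I would compute
\[
\|L^{\sigma,q}_x(f)\|_{B(L^q(\G_x))}=\sup_{\|\zeta\|_q\le1}\sup_{\|\xi\|_p\le1}|\langle\xi,L^{\sigma,q}_x(f)\zeta\rangle|=\sup_{\|\xi\|_p\le1}\sup_{\|\zeta\|_q\le1}|\langle L^{\sigma,p}_x(f^{*_\sigma})\xi,\zeta\rangle|=\|L^{\sigma,p}_x(f^{*_\sigma})\|_{B(L^p(\G_x))},
\]
where the middle equality is Lemma~\ref{dual} together with a reordering of the two suprema. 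Taking the supremum over $x\in\Go$ gives $\|f^{*_\sigma}\|_{r,p}=\|f\|_{r,q}$, and the symmetric computation (swapping the roles of $p$ and $q$) gives $\|f^{*_\sigma}\|_{r,q}=\|f\|_{r,p}$. Hence $\|f^{*_\sigma}\|_{\sharp,p}=\max\{\|f\|_{r,q},\|f\|_{r,p}\}=\|f\|_{\sharp,p}$, completing the list of axioms on $C_c(\G,\sigma)$.

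I expect the only genuinely delicate points to be bookkeeping rather than conceptual. First, one must apply the duality norm identity $\|\eta\|_r=\sup_{\|\zeta\|_{r'}\le1}|\langle\eta,\zeta\rangle|$ in the one-sided form that works uniformly for all $1\le p,q\le\infty$, including the non-reflexive endpoints $p\in\{1,\infty\}$ where $L^p$ need not be the full dual of $L^q$; the point is that only this supremum formula is needed, and it is valid in every case. Second, to conclude that $\|\cdot\|_{\sharp,p}$ is an honest norm on all of $L^1(\G,\sigma,\lambda)$ and not merely on $C_c(\G,\sigma)$, I would note that the representations $L^{\sigma,p}_x$ are $I$-norm bounded by \eqref{eq_norm} and hence extend continuously to $L^1(\G,\sigma,\lambda)$, while the bound $\|f\|_\infty\le\|f\|_{r,p}\le\|f\|_I$ forces the $I$-completion to embed into continuous functions on $\G$; this keeps the extended $\|\cdot\|_{\sharp,p}$ definite, so the statement holds on all of $L^1(\G,\sigma,\lambda)$.
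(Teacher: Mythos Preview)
Your proof is correct and follows essentially the same approach as the paper: both reduce to showing the involution is isometric via Lemma~\ref{dual}, computing $\|L^{\sigma,p}_x(f^{*_\sigma})\|_{B(L^p(\G_x))}=\|L^{\sigma,q}_x(f)\|_{B(L^q(\G_x))}$ through the duality pairing and then swapping $p$ and $q$. You supply more detail than the paper on the normed-algebra axioms (submultiplicativity, the endpoint duality, and definiteness on the $I$-completion), which the paper simply declares ``easy to check''; the one slightly loose phrase is the claim that the $I$-completion ``embeds into continuous functions on $\G$'', but the definiteness you are after can alternatively be read off from the faithfulness of the regular representation on $L^1(\G,\sigma)$, which the paper records just before \eqref{eq:2-twisted-left-reg-rep}.
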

	\begin{proof} We will adapt the proof of \cite[Proposition 4.2]{SaWi} to our setting. 
	Given $f\in C_c(\G,\sigma)$ we have that 
			\begin{align*}\|f\|_{\sharp,p} & =\max\{\sup_{x\in \Go}\{ \|L^{\sigma, p}_x(f)\|\},\, \sup_{x\in \Go}\{\|L^{\sigma, q}_x(f)\| \}\} \\ & =\sup_{x\in \Go}\{ \|L^{\sigma, p}_x(f)\|,\, \|L^{\sigma, q}_x(f)\| \}\leq \|f\|_I,\end{align*}
			by (\ref{eq_norm}), so $\|f\|_{\sharp,p}$ is well defined. It is easy to check that $(L^1(\G,\sigma), \|\cdot \|_{\sharp,p})$ is a normed algebra. We only have to prove that the involution is an isometry with respect to $\|\cdot \|_{\sharp,p}$. Let $f\in L^1(\G,\sigma)$. Given $x\in \Go$, by Lemma \ref{dual} we have that 
			\begin{align*}
\|L^{\sigma, p}_x(f^{*_\sigma})\|_{B(L^p(\G_x))} & =\sup\{|\langle L^{\sigma, p}_x(f^{*_\sigma})\xi,\zeta\rangle|: \|\xi\|_p\leq 1,\,\|\zeta\|_q\leq 1 \} \\ 
& =\sup\{|\langle \xi,L^{\sigma, q}_x(f)\zeta\rangle|: \|\xi\|_p\leq 1,\,\|\zeta\|_q\leq 1 \} \\
& = \|L^{\sigma, q}_x(f)\|_{B(L^q(\G_x))}\,.
			\end{align*}
			Similarly, switching $p$ and $q$ we obtain that 
			$$\|L^{\sigma, q}_x(f^{*_\sigma})\|_{B(L^q(\G_x))}=\|L^{\sigma, p}_x(f)\|_{B(L^p(\G_x))}\,,$$
			and therefore $\|f^{*_\sigma}\|_{\sharp,p}=\|f\|_{\sharp,p}$, as desired.
	\end{proof}
	
	\begin{defn}
		Let $\G$ be a second-countable locally compact  groupoid with Haar system $\lambda$ and $\sigma \in Z^2 (\G,\T)$, and let $1\leq p\leq \infty$. The Banach $*$-algebra  $F^p_\sharp(\G,\sigma,\lambda)$  ($F^p_\sharp(\G,\sigma)$ when there is no ambiguity on the Haar system) is defined to be the completion of $L^1(\G,\sigma,\lambda)$ with respect to the norm $\|\cdot \|_{\sharp,p}$.
	\end{defn}
	
	Given $1\leq p,q\leq \infty$ with $1=\frac{1}{p}+\frac{1}{q}$, the Banach $*$-algebras $F^p_\sharp(\G,\sigma,\lambda)$ and $F^q_\sharp(\G,\sigma,\lambda)$ are isometrically isomorphic. Moreover,   $F^2_\sharp(\G,\sigma,\lambda)=C^*_r(\G,\sigma,\lambda)$. 
	
	The following result is a combination of \cite[Theorem 5.1.1]{BergLof} and \cite[Section 10.1]{Cal}.
	\begin{lemma}\label{lemma_inter}
	Let $\G$ be a second-countable locally compact  groupoid with Haar system $\lambda$, let $x\in \Go$, and let $1\leq p_1\leq p_2\leq p_3\leq \infty$. Suppose that $T$ is a bounded operator defined on $L^{p_1}(\G_x,\lambda_x)\cap L^{p_3}(\G_x,\lambda_x)$ such that it extends continuously to bounded operators on both $L^{p_1}(\G_x,\lambda_x)$ and $L^{p_3}(\G_x,\lambda_x)$: Then $T$ extends continuously on $L^{p_2}(\G_x,\lambda_x)$. Furthermore, if $M_i$ is the norm of the extension of $T$ on $L^{p_i}(\G_x,\lambda_x)$ for $i=1,2,3$, then 
	$$M_2\leq M_1^{1-\theta}M_3^\theta\,,$$
	for $0<\theta<1$ satisfying 
	$$\frac{1}{p_2}=\frac{1-\theta}{p_1}+\frac{\theta}{p_3}\,.$$ 
	\end{lemma}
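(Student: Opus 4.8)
The plan is to recognise the statement as the Riesz--Thorin interpolation theorem in the diagonal situation where $T$ carries the same exponent in domain and range on each endpoint, and to deduce it via the complex interpolation method, which is exactly what the combination of the two cited references supplies. First I would record a measure-theoretic preliminary: since $\G$ is second-countable and locally compact Hausdorff it is $\sigma$-compact, the fibre $\G_x = s^{-1}(x)$ is closed and hence also $\sigma$-compact, so $\lambda_x$ is $\sigma$-finite. Consequently the spaces $\{L^p(\G_x,\lambda_x)\}_{1\le p \le \infty}$ form a compatible Banach couple sitting inside the measurable functions on $\G_x$, and $L^{p_1}(\G_x,\lambda_x)\cap L^{p_3}(\G_x,\lambda_x)$ is dense in $L^{p_2}(\G_x,\lambda_x)$ whenever $p_2 < \infty$. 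This density is what makes the conclusion well posed: $T$ is prescribed only on the intersection, and it admits at most one continuous extension to $L^{p_2}$.

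With this in place I would invoke the complex method. By \cite[Theorem 5.1.1]{BergLof} one has the isometric identification $[L^{p_1}(\G_x,\lambda_x),L^{p_3}(\G_x,\lambda_x)]_\theta = L^{p_2}(\G_x,\lambda_x)$, where $\theta\in(0,1)$ satisfies $\tfrac{1}{p_2} = \tfrac{1-\theta}{p_1} + \tfrac{\theta}{p_3}$, and this holds uniformly for all $1 \le p_1 \le p_3 \le \infty$. The fundamental interpolation property of the complex method \cite[Section 10.1]{Cal} then guarantees that a linear map bounded $L^{p_1}\to L^{p_1}$ with norm $M_1$ and $L^{p_3}\to L^{p_3}$ with norm $M_3$ extends to a bounded operator on $[L^{p_1},L^{p_3}]_\theta$ of norm at most $M_1^{1-\theta}M_3^\theta$. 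Combining the two yields both the continuous extension of $T$ to $L^{p_2}(\G_x,\lambda_x)$ and the estimate $M_2 \le M_1^{1-\theta}M_3^\theta$. Note that the only case needing attention has $p_2 < \infty$: if $p_2 = \infty$, then $\theta\in(0,1)$ forces $p_1 = p_3 = \infty$ and the statement is trivial, so the density of the preceding paragraph always applies and the extension is the one already under consideration.

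A concrete alternative would be to prove Riesz--Thorin by hand: estimate $|\langle Tf,g\rangle|$ for simple $f,g$ by forming the analytic family $f_z, g_z$ with interpolated exponents, so that $F(z) = \langle Tf_z, g_z\rangle$ is bounded and analytic on the strip $0\le \Re z \le 1$ with $|F(it)|\le M_1$ and $|F(1+it)|\le M_3$, and then apply the Hadamard three-lines lemma at $z=\theta$. I expect the main obstacle, in either approach, to be the endpoint behaviour when $p_1 = 1$ or $p_3 = \infty$: there $L^\infty$ is non-separable and $L^{p_1}\cap L^\infty$ is not dense in $L^\infty$, so the duality and density steps of the bare-hands argument require extra care. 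This is precisely why I would route through Calderón's identification of the interpolation spaces of an $L^p$-couple, which covers these endpoints uniformly; once the isometry $[L^{p_1},L^{p_3}]_\theta = L^{p_2}$ is available for all admissible exponents, the desired norm bound is a formal consequence of the interpolation property and no separate endpoint analysis is needed.
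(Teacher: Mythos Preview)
Your proposal is correct and takes essentially the same approach as the paper: the paper offers no proof at all beyond the sentence ``The following result is a combination of \cite[Theorem 5.1.1]{BergLof} and \cite[Section 10.1]{Cal},'' and you invoke precisely these two references (complex interpolation of $L^p$-couples plus the interpolation property). Your added remarks on $\sigma$-finiteness of $\lambda_x$ and density of $L^{p_1}\cap L^{p_3}$ in $L^{p_2}$ are useful sanity checks that the paper omits, but the core argument is identical.
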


	\begin{prop}\label{prop:interpolation_triple}
		Let $\G$ be a second-countable locally compact   groupoid with Haar system $\lambda$ and $\sigma \in Z^2 (\G,\T)$, and suppose $1< p< 2$. Then 
		$$(L^1(\G,\sigma,\lambda),F^{p}_\sharp(\G,\sigma,\lambda),C^*_{r}(\G,\sigma,\lambda))$$
		is a spectral interpolation triple of reduced 
		Banach $*$-algebras relative to $L^1(\G,\sigma,\lambda)$.
	\end{prop}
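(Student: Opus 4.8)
The plan is to exhibit the explicit interpolation parameter $\theta = 2 - \tfrac{2}{p} \in (0,1)$ (so that $1-\theta = \tfrac{2}{p}-1$) and to reduce everything to a fibrewise application of Lemma~\ref{lemma_inter}. Throughout I take the distinguished dense $*$-subalgebra to be $\mathcal{S} = L^1(\G,\sigma,\lambda) = \A$ itself, and I write $q$ for the conjugate exponent, $\tfrac1p + \tfrac1q = 1$, so that $1 < p < 2 < q < \infty$.

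First I would check that the triple is nested. The inclusion $L^1(\G,\sigma) \hookrightarrow F^p_\sharp(\G,\sigma)$ is continuous with dense range since $\|f\|_{\sharp,p} \leq \|f\|_I$ on $C_c(\G,\sigma)$ by \eqref{eq_norm}. For $F^p_\sharp(\G,\sigma) \hookrightarrow F^2_\sharp(\G,\sigma) = C^*_r(\G,\sigma)$, I would fix $x \in \Go$ and apply Lemma~\ref{lemma_inter} to the single operator $L^{\sigma,\cdot}_x(f)$ with exponents $p < 2 < q$; because $\tfrac1p + \tfrac1q = 1$ the interpolation parameter is exactly $\tfrac12$, giving $\|L^{\sigma,2}_x(f)\| \leq \|L^{\sigma,p}_x(f)\|^{1/2}\|L^{\sigma,q}_x(f)\|^{1/2} \leq \|f\|_{\sharp,p}$, whence $\|f\|_{r,2} \leq \|f\|_{\sharp,p}$ after taking the supremum over $x$. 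Injectivity of this inclusion (needed for a genuine embedding) follows from the observation that for $\xi,\eta \in C_c(\G_x)$ the operators $L^{\sigma,p}_x(g)$, $L^{\sigma,2}_x(g)$ and $L^{\sigma,q}_x(g)$ act by the same convolution kernel, so they are determined by the same sesquilinear data on the common core $C_c(\G_x)$; thus an element of $F^p_\sharp(\G,\sigma)$ whose image in $C^*_r(\G,\sigma)$ vanishes must itself be zero.

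The core of the argument is a fibrewise interpolation of convolution powers. For $h \in C_c(\G,\sigma)$ and fixed $x$, I would apply Lemma~\ref{lemma_inter} twice to the operator $L^{\sigma,\cdot}_x(h)$: once with the triple of exponents $(1,p,2)$ and once with $(2,q,\infty)$. The first yields $\|L^{\sigma,p}_x(h)\| \leq \|L^{\sigma,1}_x(h)\|^{1-\theta}\|L^{\sigma,2}_x(h)\|^{\theta}$ with $\theta = 2-\tfrac2p$; the second yields $\|L^{\sigma,q}_x(h)\| \leq \|L^{\sigma,2}_x(h)\|^{\theta}\|L^{\sigma,\infty}_x(h)\|^{1-\theta}$, the key point being that the second interpolation parameter works out to $1-\theta$, so that both estimates assign weight $\theta$ to the $L^2$-endpoint and weight $1-\theta$ to the extreme endpoint. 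Bounding $\|L^{\sigma,1}_x(h)\|$ and $\|L^{\sigma,\infty}_x(h)\|$ by $\|h\|_I$ via \eqref{eq_norm} and taking the supremum over $x$ gives $\|h\|_{r,p} \leq \|h\|_I^{1-\theta}\|h\|_{r,2}^{\theta}$ and $\|h\|_{r,q} \leq \|h\|_I^{1-\theta}\|h\|_{r,2}^{\theta}$, and therefore $\|h\|_{\sharp,p} \leq \|h\|_I^{1-\theta}\|h\|_{r,2}^{\theta}$. Since $\|\cdot\|_{\sharp,p}$ and $\|\cdot\|_{r,2}$ are both dominated by $\|\cdot\|_I$, all three norms are $\|\cdot\|_I$-continuous, so this inequality extends from $C_c(\G,\sigma)$ to all of $L^1(\G,\sigma)$ by density.

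Finally I would feed convolution powers into this norm inequality. For $f \in L^1(\G,\sigma)_h = \mathcal{S}_h$, applying it to $h = f^{\star_\sigma n}$ gives $\|f^{\star_\sigma n}\|_{\sharp,p} \leq \|f^{\star_\sigma n}\|_I^{1-\theta}\|f^{\star_\sigma n}\|_{r,2}^{\theta}$; taking $n$-th roots and using the spectral radius formula $r(a) = \lim_n \|a^{\star_\sigma n}\|^{1/n}$ in each of $L^1(\G,\sigma)$, $F^p_\sharp(\G,\sigma)$ and $C^*_r(\G,\sigma)$ yields $r_{F^p_\sharp(\G,\sigma)}(f) \leq r_{L^1(\G,\sigma)}(f)^{1-\theta}\, r_{C^*_r(\G,\sigma)}(f)^{\theta}$, which is precisely the required estimate. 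The step I expect to be the main obstacle is the exponent bookkeeping: one must verify that interpolating $L^p$ against the pair $(L^1,L^2)$ and $L^q$ against the pair $(L^2,L^\infty)$ produces the identical pair of weights $(1-\theta,\theta)$, so that the maximum defining $\|\cdot\|_{\sharp,p}$ collapses to a single clean bound. A secondary technical point is justifying that $L^{\sigma,\cdot}_x(h)$ is genuinely one operator bounded on each $L^r(\G_x)$ (so that Lemma~\ref{lemma_inter} applies) and that the passage from $C_c(\G,\sigma)$ to $L^1(\G,\sigma)$, and hence to the convolution powers $f^{\star_\sigma n}$, is legitimate.
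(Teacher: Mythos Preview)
Your approach is essentially the same as the paper's: the same interpolation parameter $\theta = 2 - \tfrac{2}{p} = \tfrac{2p-2}{p}$, the same two fibrewise applications of Lemma~\ref{lemma_inter} (to the triples $(1,p,2)$ and $(2,q,\infty)$) giving $\|f\|_{\sharp,p}\le \|f\|_I^{1-\theta}\|f\|_{r,2}^{\theta}$, and the same passage to spectral radii via $n$-th powers. The exponent bookkeeping you flag as the main obstacle is correct as you state it.

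Where your write-up is thinner than the paper is the nestedness step, specifically injectivity. You address injectivity of $F^p_\sharp(\G,\sigma)\to C^*_r(\G,\sigma)$ only heuristically (``same convolution kernel, same sesquilinear data on $C_c(\G_x)$''), but an element of $F^p_\sharp(\G,\sigma)$ is a completion class, not a function, so one must argue via an approximating sequence: the paper takes $f_n\to f$ in $\|\cdot\|_{\sharp,p}$, extracts from $\|L^{\sigma,2}_x(f_n)\xi\|_2\to 0$ the vanishing of the convolution integrals against $\xi\in C_c(\G_x)$, and feeds this back into the isometric representation $\Psi=\bigoplus_x(L^{\sigma,p}_x\oplus L^{\sigma,q}_x)$ of $F^p_\sharp(\G,\sigma)$ to conclude $\Psi(f)=0$. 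You also do not address injectivity of $L^1(\G,\sigma)\to F^p_\sharp(\G,\sigma)$ at all; the paper obtains this by observing that the composite $L^1(\G,\sigma)\to F^p_\sharp(\G,\sigma)\to C^*_r(\G,\sigma)$ is the left regular representation, which is faithful. Both points are routine once noticed, but as written your nestedness paragraph does not yet establish that the triple consists of honest embeddings.
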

	\begin{proof}
Let $q\in (2,\infty)$   such that $1=\frac{1}{p}+\frac{1}{q}$, and let  
$$\theta=\frac{2p-2}{p}\in (0,1)\,,$$
and hence
$$\frac{1}{p}=\frac{1-\theta}{1}+\frac{\theta}{2}\,.$$
Then, for $x\in \Go$ and $f\in L^1(\G, \sigma)$, using Lemma \ref{lemma_inter} and \eqref{eq:I-norm} we have that 
\begin{align*} \|L^{\sigma, p}_x(f)\|_{B(L^{p}(\G_x))} & \leq \|L^{\sigma, 1}_x(f)\|^{1-\theta}_{B(L^{1}(\G_x))}\|L^{\sigma, 2}_x(f)\|^{\theta}_{B(L^{2}(\G_x))} \\
& \leq \|f\|^{1-\theta}_{\sharp,1}\|f\|^{\theta}_{r,2}\leq \|f\|^{1-\theta}_{I}\|f\|^{\theta}_{r,2}\,, 
\end{align*}
and therefore $\|f\|_{r,p}\leq  \|f\|^{1-\theta}_{I}\|f\|^{\theta}_{r,2}$. On the other hand,
\begin{align*}
\frac{1}{q} & =1-\frac{1}{p}=1 - \left( \frac{1- \theta}{1}+\frac{\theta}{2}\right)  \\
& = 1-(1-\theta)-\theta\left( 1-\frac{1}{2}\right)  \\
&= 0+\frac{\theta}{2} = \frac{1-\theta}{\infty}+\frac{\theta}{2}\,,
\end{align*}
so we can apply the same above argument to show that $\|f\|_{r,q}\leq  \|f\|^{1-\theta}_{I}\|f\|^{\theta}_{r,2}$. Hence,
\begin{equation}\label{eq_norms}
\|f\|_{\sharp,p}\leq  \|f\|^{1-\theta}_{I}\|f\|^{\theta}_{r,2}\,,
\end{equation}
for every $f\in L^1(\G, \sigma)$.

Therefore given $f\in L^1(\G, \sigma)$, we have that 
$$\|f^n\|^{\frac{1}{n}}_{\sharp,p}\leq  \|f^n\|^{\frac{1-\theta}{n}}_{I}\|f^n\|^{\frac{\theta}{n}}_{r,2}\,,$$
for every $n\in\N$. Then taking the limit as $n \to \infty$ we have that 
\begin{equation}\label{eq_radius}
r_{F^{p}_\sharp(\G,\sigma)}(f)\leq r_{L^1(\G, \sigma)}(f)^{1-\theta}r_{ C^*_{r}(\G,\sigma)}(f)^\theta\,,
\end{equation}
for every $f\in L^1(\G, \sigma)$. 

To finish the proof we only need to prove that 		$(L^1(\G,\sigma),F^{p}_\sharp(\G,\sigma),C^*_{r}(\G,\sigma))$ is a nested triple of reduced 
Banach $*$-algebras.

Let $\theta\in (0,1)$ be such that 
$$\frac{1}{2}=\frac{1-\theta}{p}+\frac{\theta}{q}\,.$$
Then for $f\in L^1(\G,\sigma)$,
\begin{align*}
\|f\|_{\sharp,p} & = \sup_{x\in \Go}\{\|L^{\sigma, p}_x(f)\|_{B(L^p(\G_x))},\, \|L^{\sigma, q}_x(f)\|_{B(L^q(\G_x))}\} \\
& \geq \sup_{x\in \Go} \{\|L^{\sigma, p}_x(f)\|_{B(L^p(\G_x))}^{1-\theta} \|L^{\sigma, q}_x(f)\|^\theta_{B(L^q(\G_x))}\} \\
& \geq \sup_{x\in \Go} \{\|L^{\sigma, 2}_x(f)\|_{B(L^2(\G_x))}\}=\|f\|_{r,2}\,,
\end{align*} 
by Lemma \ref{lemma_inter}. Therefore the identity map on $L^1(\G,\sigma)$ extends to a contractive $*$-homomorphism 
$$\pi_{p,2}:F^{p}_\sharp(\G,\sigma)\to C^*_r(\G,\sigma)\,.$$
Now we want to see that $\pi_{p,2}$ is injective. Let $f\in \ker(\pi_{p,2})$. Then there exists a sequence $\{f_n\}_{n=1}^\infty$ in $L^1(\G,\sigma)$ such that $\lim f_n=f$ in $F^{p}_\sharp(\G,\sigma)$.  Then given $x\in \Go$ and $\xi\in C_c(\G_x)$ we have that 
$$\lim_{n\to \infty} L^{\sigma, 2}_x(f_n)\xi=0\qquad \text{in }L^2(\G_x)\,,$$
that is
\begin{equation*}
\begin{split}
0 &= \| \lim_{n\to \infty} L^{\sigma, 2}_x(f_n)\xi\|_{L^2(\G_x)} \\
&= \left( \int_{\G_x} \lim_{n \to \infty} \vert L^{\sigma, 2}_x (f_n)\xi(\gamma)\vert^2 \,d\lambda_x(\gamma)\right)^{1/2} \\
&=  \left(\int_{\G_x} \lim_{n \to \infty} \left|  \int_{\G_x} \sigma(\gamma \mu^{-1}, \mu) f_n(\gamma \mu^{-1}) \xi(\mu) \,d\lambda_x(\mu) \right|^2 \,d\lambda_x(\gamma)   \right)^{1/2} \,,
\end{split}
\end{equation*}
which forces
\begin{equation}\label{vanish_int}
	\lim_{n\to \infty} \left| \int_{\G_x} \sigma(\gamma \mu^{-1}, \mu) f(\gamma \mu^{-1}) \xi(\mu) \,d\lambda_x(\mu) \right| = 0\,.
\end{equation}
Now observe that the map 
$$\Psi:L^1(\G,\sigma)\to B\left( \oplus_{x\in \Go} \left( L^p(\G_x)\oplus L^q(\G_x)\right) \right) \,,$$
defined by $f\mapsto  \bigoplus_{x\in \Go} (L^{\sigma, p}_x(f) \oplus  L^{\sigma, q}_x(f))$, extends isometrically to a map   $$\Psi:F^{p}_\sharp(\G,\sigma)\to B\left( \oplus_{x\in \Go} \left( L^p(\G_x)\oplus L^q(\G_x)\right) \right)\,.$$
Now fix $x\in \Go$ and  $i\in\{p,q\}$. Then given $\xi\in C_c(\G_x)$ we have that 
$$\Psi(f)\xi=\lim_{n\to\infty}\Psi(f_n)\xi\,,$$
but
\begin{equation*}
	\begin{split}
	\Vert \Psi(f)\xi \Vert_{L^i (\G_x)} &= \|\lim_{n\to\infty} \Psi(f_n)\xi\|_{L^i({\G_x})}\\
	 &=\| \lim_{n\to\infty} L^{\sigma, i}_x(f_n)\xi\|_{L^i({\G_x})}\\
	&=\lim_{n\to \infty} \left(\int_{\G_x} \left|  \int_{\G_x} \sigma(\gamma \mu^{-1}, \mu) f_n(\gamma \mu^{-1}) \xi(\mu) \,d\lambda_x(\mu) \right|^i \,d\lambda_x(\gamma)   \right)^{1/i} \\
	&= 0
	\end{split}
\end{equation*}
because of (\ref{vanish_int}). Thus, $\Psi(f)=0$ and since $\Psi$ is isometric we have that $f=0$. Hence $\pi_{p,2}$ is injective.

Now, by (\ref{eq_norms}) and the fact that the regular representation is $I$-norm bounded, we get
$$\|f\|_{\sharp,p}\leq \|f\|^{1-\theta}_{I}\|f\|^{\theta}_{r,2}\leq \|f\|^{1-\theta}_{I}\|f\|^{\theta}_{I}=\|f\|_{I}\,,$$
for every $f\in L^1(\G,\sigma)$. Hence, the identity map on $L^1(\G,\sigma)$ extends to a contraction 
$$\Phi_{p,I}:L^1(\G,\sigma) \to F^{p}_\sharp(\G,\sigma)\,.$$
Observe that then  $(\pi_{p,2}\circ \Phi_{p,I}):L^1(\G,\sigma)\to C^*_r(\G,\sigma)$ is the regular representation of $L^1(\G,\sigma)$, which is injective. It follows that $\Phi_{p,I}:L^1(\G,\sigma)\to F^{p}_\sharp(\G,\sigma)$ is injective too. 
	\end{proof}

	\begin{prop}\label{prop:equivalence_quasi_symmetric_hermitian}
		Let $\G$ be a second-countable locally compact  groupoid with Haar system $\lambda$ and $\sigma \in Z^2 (\G,\T)$. Then the following statements are equivalent:
		\begin{enumerate}
			\item $\G$ is $\sigma$-quasi-symmetric,
			\item $\G$ is $\sigma$-quasi-Hermitian,
			\item $r_{L^1(\G,\sigma,\lambda)}(f)=r_{C^*_r(\G,\sigma,\lambda)}(f)$ for every $f\in C_c(\G,\sigma)_h$,
			\item $\Spec_{L^1(\G,\sigma,\lambda)}(f)=\Spec_{C^*_r(\G,\sigma,\lambda)}(f)$ for every  $f\in C_c(\G,\sigma)$.
		\end{enumerate}	
	\end{prop}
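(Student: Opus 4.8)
The plan is to prove the cycle $(1)\Rightarrow(2)\Rightarrow(3)\Rightarrow(4)\Rightarrow(1)$; together with the obvious implications $(4)\Rightarrow(2)$ and $(4)\Rightarrow(3)$ this yields all four equivalences. Two of the implications are essentially free. The implication $(1)\Rightarrow(2)$ is precisely \Cref{prop:quasi_hermitian_implies_quasi_symmetric}. For $(4)\Rightarrow(1)$ (and simultaneously $(4)\Rightarrow(3)$), observe that for $f\in C_c(\G,\sigma)$ the element $f^{*_\sigma}\star_\sigma f$ again lies in $C_c(\G,\sigma)$, so $(4)$ gives $\Spec_{L^1(\G,\sigma)}(f^{*_\sigma}\star_\sigma f)=\Spec_{C^*_r(\G,\sigma)}(f^{*_\sigma}\star_\sigma f)\subseteq[0,\infty)$, the last inclusion holding because $C^*_r(\G,\sigma)$ is a $C^*$-algebra; this is exactly $\sigma$-quasi-symmetry, and taking spectral radii of Hermitian elements in $(4)$ gives $(3)$.

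For $(3)\Rightarrow(4)$ I would apply the Barnes--Hulanicki theorem (\Cref{Hulanicki}) with $\A=L^1(\G,\sigma)$, dense $*$-subalgebra $\mathcal{S}=C_c(\G,\sigma)$, and the faithful regular representation $\pi=\bigoplus_{x\in\Go}L^{\sigma,2}_x$. For $f\in C_c(\G,\sigma)_h$ the operator $\pi(f)$ is self-adjoint, so $\|\pi(f)\|=r_{C^*_r(\G,\sigma)}(f)$, which by $(3)$ equals $r_{L^1(\G,\sigma)}(f)$. The hypothesis of \Cref{Hulanicki} is thus verified, and we conclude $\Spec_{L^1(\G,\sigma)}(f)=\Spec_{C^*_r(\G,\sigma)}(f)$ for every $f\in C_c(\G,\sigma)$, which is $(4)$.

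The substance of the statement is $(2)\Rightarrow(3)$. Fixing $p\in(1,2)$, \Cref{prop:interpolation_triple} makes $(L^1(\G,\sigma),F^p_\sharp(\G,\sigma),C^*_r(\G,\sigma))$ a spectral interpolation triple relative to the dense $*$-subalgebra $C_c(\G,\sigma)$. Since $(2)$ says $C_c(\G,\sigma)$ is quasi-Hermitian in $L^1(\G,\sigma)$, \Cref{sp_int_triple} applies and gives $r_{F^p_\sharp(\G,\sigma)}(f)=r_{C^*_r(\G,\sigma)}(f)$ for all $f\in C_c(\G,\sigma)_h$, together with $C^*(F^p_\sharp(\G,\sigma))=C^*_r(\G,\sigma)$. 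The contractive dense inclusions coming from $\|\cdot\|_{r,2}\le\|\cdot\|_{\sharp,p}\le\|\cdot\|_I$ give $r_{C^*_r(\G,\sigma)}(f)\le r_{F^p_\sharp(\G,\sigma)}(f)\le r_{L^1(\G,\sigma)}(f)$, so the whole implication reduces to proving the single inequality
$$r_{L^1(\G,\sigma)}(f)\le r_{C^*_r(\G,\sigma)}(f),\qquad f\in C_c(\G,\sigma)_h.$$

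This inequality is where I expect the real difficulty to lie; it is the groupoid counterpart of the hard direction in \cite{SaWi}, and it is equivalent to $\Spec_{L^1(\G,\sigma)}(f)$ carrying no extra points beyond $\Spec_{C^*_r(\G,\sigma)}(f)$, i.e.\ to spectral invariance. The interpolation inequality only bounds the \emph{middle} algebra $F^p_\sharp(\G,\sigma)$ from above, so it cannot by itself push $r_{L^1}$ down to $r_{C^*_r}$. To close the gap I would exploit the whole family $\{F^p_\sharp(\G,\sigma)\}_{1<p<2}$: for $1<p'<p<2$ one has $\|\cdot\|_{\sharp,p}\le\|\cdot\|_{\sharp,p'}\le\|\cdot\|_I$ (via \Cref{lemma_inter}), so $p\mapsto\|g\|_{\sharp,p}$ increases to $\|g\|_I$ as $p\to1^+$, whereas $r_{F^p_\sharp(\G,\sigma)}(f)=r_{C^*_r(\G,\sigma)}(f)$ is independent of $p$. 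Letting $p\to1^+$ and interchanging this limit with the limit $\lim_n\|f^n\|^{1/n}$ defining the spectral radius would yield $r_{L^1(\G,\sigma)}(f)=r_{C^*_r(\G,\sigma)}(f)$. Justifying the interchange of limits is the crux: I would base it on the log-convexity of $p\mapsto\|f^n\|_{\sharp,p}$ furnished by \Cref{lemma_inter} together with the monotonicity of the family, upgrading the elementary minimax inequality $\sup_p\inf_n\le\inf_n\sup_p$ to an equality. This final step, equivalently the identification of the $C^*$-envelope of the inductive limit of the $F^p_\sharp(\G,\sigma)$ with that of $L^1(\G,\sigma)$, is the main obstacle of the proof.
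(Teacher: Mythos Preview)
Your handling of the implications $(1)\Rightarrow(2)$, $(3)\Rightarrow(4)$ and $(4)\Rightarrow(1)$ matches the paper exactly, and your setup for $(2)\Rightarrow(3)$ via the interpolation triple and \Cref{sp_int_triple} is also what the paper does. The gap is in the final step, where you try to pass from $r_{F^p_\sharp(\G,\sigma)}(f)=r_{C^*_r(\G,\sigma)}(f)$ (valid for each $p$) to $r_{L^1(\G,\sigma)}(f)\le r_{C^*_r(\G,\sigma)}(f)$ by letting $p\to 1^+$ and interchanging limits.

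This interchange cannot be justified the way you suggest. Writing the spectral radius as $\inf_n\|f^n\|^{1/n}$, what you want is $\inf_n\sup_p\|f^n\|_{\sharp,p}^{1/n}\le\sup_p\inf_n\|f^n\|_{\sharp,p}^{1/n}$, which is the \emph{reverse} of the minimax inequality you quote; monotonicity and log-convexity in $p$ do not reverse it. Indeed, the left side is $r_{L^1}(f)$ and the right side is the constant $r_{C^*_r}(f)$, so your argument as written only recovers the trivial bound $r_{C^*_r}(f)\le r_{L^1}(f)$. (Even the preliminary claim that $\|g\|_{\sharp,p}\uparrow\|g\|_I$ as $p\to1^+$ is not established and would need a separate argument.)

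The paper avoids the interchange altogether by proving a \emph{quantitative} estimate linking $\|f^n\|_I$ to $\|f^{n-1}\|_{\sharp,p}$ for each fixed $n$. Using that $f$ has compact support $U$, one shows $\lambda_x(U^{(n)}\cap\G_x)\le K^n$ for a constant $K<\infty$ depending only on $U$; then H\"older's inequality with exponents $p,q$ gives $\|f^n\|_I\le C\,K^{n/q}\,\|f^{n-1}\|_{\sharp,p}$. Taking $n$-th roots and $n\to\infty$ first yields $r_{L^1}(f)\le K^{1/q}\,r_{F^p_\sharp}(f)=K^{1/q}\,r_{C^*_r}(f)$, and only then does one send $p\to1^+$ (hence $q\to\infty$) so that $K^{1/q}\to1$. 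The compact-support volume bound is the missing idea; it replaces the ill-posed limit interchange by an explicit error factor that can be killed at the end.
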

	\begin{proof}
	 $(1)\Rightarrow (2)$ was proved in Proposition \ref{prop:quasi_hermitian_implies_quasi_symmetric}. $(3)\Rightarrow (4)$ is proved in Theorem \ref{Hulanicki}, and $(4)\Rightarrow (1)$  is clear. So we only need to prove $(2)\Rightarrow (3)$. 
	
	Suppose that $\G$ is $\sigma$-quasi-Hermitian and $1<p<2$. Then 
	$$(L^1(\G,\sigma),F^{p}_\sharp(\G,\sigma),C^*_{r}(\G,\sigma))$$
	is a spectral interpolation triple of reduced Banach $*$-algebras 
	relative to  $L^1(\G,\sigma)$ by Proposition \ref{prop:interpolation_triple}. Hence, by Proposition \ref{sp_int_triple} we have that 
	$$r_{F^{p}_\sharp(\G,\sigma)}(f)=r_{C^*_{r}(\G,\sigma)}(f)\,,$$
	for every $f\in C_c(\G,\sigma)_h$.
	
	Now fix $f\in C_c(\G,\sigma)_h$. Then the sets $U=\text{Supp}(f)$ and $s(U)$ are compact sets. Replacing $U$ by $U\cup U^{-1}$ we can assume that $r(U)=s(U)$, and that given any $x\in s(U)$, the map $\G_x\cap U\to \G^x\cap U$ given by $\gamma\mapsto \gamma^{-1}$ is a bijection. Then  since $\G$ is locally compact, using a partition of the unit, there exists a function $g_1\in C_c(\G,\sigma)$ such that $g_1(\gamma)=1$ for every $\gamma\in U$. Then since the map $x\mapsto \int_{\G_x}g_1(\gamma)\,d\lambda_x(\gamma)$ is continuous we have that 
\begin{equation}\label{bounded}
K:=\sup\left\lbrace \int_{\G_x}g_1(\gamma)\,d\lambda_x(\gamma):x\in s(U) \right\rbrace <\infty\,.
\end{equation} 
	Observe that given $x\in s(U)$ we have that 
	$$\int_{\G_x}g_1(\gamma)\,d\lambda_x(\gamma)=\lambda_x(\G_x\cap U)\leq K\,.$$
	
	Now given $n\in\N$, we denote by $f^n$ the $n$'th convolution power $f\star_\sigma\cdots \star_\sigma f$. Then we have that 
	$$\text{Supp}(f^n)\subseteq U^{(n)}=\{ \gamma_1\cdots \gamma_n: \gamma_i\in U\text{ such that }r(\gamma_i)=s(\gamma_{i+1})\text{ for }i=1,\dots,n-1  \}\,.$$
	By continuity of the groupoid product $U^{(n)}$ is a compact subset of $\G$. Now given $x\in s(U^{(n)})$ we define $U^{(n)}_x:=U^{(n)}\cap \G_x$, so 
	\begin{align*}
\lambda_x(U^{(n)}_x) & =\int_{U^{(n-1)}_x} \lambda_{s(\gamma)}(U_{r(\gamma)}\gamma)\,d\lambda_{x}(\gamma) \leq  \int_{U^{(n-1)}_x} \lambda_{r(\gamma)}(U_{r(\gamma)})\,d\lambda_{x}(\gamma) \\
& \leq  \int_{U^{(n-1)}_x} K \,d\lambda_{x}(\gamma) = K\lambda_x(U^{(n-1)}_x) \\
& \leq K^2 \lambda_x(U^{(n-2)}_x)\leq \cdots \leq K^{n-1} \lambda_x(\G_x\cap U) \\
& \leq K^n\,,
    \end{align*}
	by using the invariance of the Haar  measures.

	 Now, fix  $n\in \N$ and $x\in s(U^{(n)})$.  Let $1_{U^{(n)}_x}\in L^1(\G_x)$ be the characteristic function on $U^{(n)}_x$. 
	Then we have that 
	\begin{align*}
	\|f^n_{|\G_x}\|_{L^1(\G_x)} & = \| f^n_{|\G_x} 1_{U^{(n)}_x} \|_{L^1(\G_x)}  \\
	& \leq \| f^n_{|\G_x}  \|_{L^p(\G_x)}\|1_{U^{(n)}_x} \|_{L^q(\G_x)} \qquad (\text{where }1=\frac{1}{p}+\frac{1}{q}) \\
	& \leq \|L^{p,\sigma}_x(f^{n-1})\|_{B(L^p(\G_x))} \|f_{|\G_x}\|_{L^p(\G_x)} K^{\frac{n}{q}} \\
	& \leq \|f^{n-1}\|_{\sharp, p}\|f_{|\G_x}\|_{L^p(\G_x)} K^{\frac{n}{q}}\,.
	\end{align*}  
In a similar way we have that 	
$$\|(f^{*_\sigma})^n_{|\G_x}\|_{L^1(\G_x)}\leq\|(f^{*_\sigma})^{n-1}\|_{\sharp, p}\|f^{*_\sigma}_{|\G_x}\|_{L^p(\G_x)} K^{\frac{n}{q}}=\|f^{n-1}\|_{\sharp, p}\|f^{*_\sigma}_{|\G_x}\|_{L^p(\G_x)} K^{\frac{n}{q}}\,.$$ 
Now using (\ref{bounded}) but replacing  $g_1$ with $|f|^p$ and $|f^{*_\sigma}|^p$, we have that
$$\sup_{x\in \Go}\{\max\{\|f_{|\G_x}\|_{L^p(\G_x)},\|f^{*_\sigma}_{|\G_x}\|_{L^p(\G_x)}\}=C<\infty\,,$$
and then 
\begin{align*}
\|f^n\|_{I} & =\sup_{x \in \Go} \{\max\{\|f^n_{|\G_x}\|_{L^1(\G_x)},\|(f^{*_\sigma})^n_{|\G_x}\|_{L^1(\G_x)} \}\} \\
& \leq \|f^{n-1}\|_{\sharp, p} \sup_{x \in \Go}\{ \max\{\|f_{|\G_x}\|_{L^p(\G_x)},\|(f^{*_\sigma})_{|\G_x}\|_{L^p(\G_x)} \}\} K^{\frac{n}{q}} \\
& \leq  \|f^{n-1}\|_{\sharp, p} C K^{\frac{n}{q}}\,.
\end{align*}
Therefore,
	$$\|f^n\|^{\frac{1}{n}}_{I}\leq   \|f^{n-1}\|^{\frac{1}{n}}_{\sharp, p} C^{\frac{1}{n}} K^{\frac{1}{q}}\,,$$
and then when $n\to \infty$, we have that
		$$r_{L^1(\G,\sigma)}(f) \leq  r_{F^{p}_\sharp(\G,\sigma)}(f)K^{\frac{1}{q}}=r_{C^*_r(\G,\sigma)}(f)K^{\frac{1}{q}}\,.$$
Then, taking the limit for $p\to 1^+$, we have that $q\to \infty$, so $r_{L^1(\G,\sigma)}(f) \leq  r_{C^*_r(\G,\sigma)}(f)$. But we always have that $r_{C^*_r(\G,\sigma)}(f)\leq r_{L^1(\G,\sigma)}(f)$, and hence 
	$$r_{C^*_r(\G,\sigma)}(f)= r_{L^1(\G,\sigma)}(f)\,.$$

	\end{proof}

	\begin{thm}\label{corl:quasi_symmetric_approx_theory}
		Let $\G$ be a second-countable locally compact   groupoid with Haar system $\lambda$ and $\sigma \in Z^2 (\G,\T)$. If $\G$ is $\sigma$-quasi-Hermitian, then $C_r^*(\G,\sigma)$ is the $C^*$-envelope of $L^1(\G,\sigma)$. In particular, $\G$ with Haar system $\lambda$ and the twist $\sigma$ has the weak containment property.  
	\end{thm}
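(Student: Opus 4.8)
The plan is to deduce this statement from the invariant spectral radius machinery already assembled in this section, via Proposition \ref{same_C_env}, rather than to carry out any new hard analysis; the substantial work has in fact already been done in Proposition \ref{prop:interpolation_triple} and Proposition \ref{prop:equivalence_quasi_symmetric_hermitian}. First I would record the one input I need: since $\G$ is $\sigma$-quasi-Hermitian, the equivalence $(2)\Leftrightarrow(3)$ in Proposition \ref{prop:equivalence_quasi_symmetric_hermitian} gives
$$r_{L^1(\G,\sigma)}(f) = r_{C^*_r(\G,\sigma)}(f)$$
for every $f \in C_c(\G,\sigma)_h$. In the terminology of the preliminaries, this says exactly that $C_c(\G,\sigma)_h$ has invariant spectral radius in the pair $(L^1(\G,\sigma), C^*_r(\G,\sigma))$.

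Next I would verify that $(L^1(\G,\sigma), C^*_r(\G,\sigma))$ is a nested pair of reduced Banach $*$-algebras with $C_c(\G,\sigma)$ a dense $*$-subalgebra of $L^1(\G,\sigma)$. Indeed, the $\sigma$-twisted left regular representation embeds $L^1(\G,\sigma)$ continuously (since $\|\cdot\|_{r,2}\le\|\cdot\|_I$) and faithfully as a dense $*$-subalgebra of $C^*_r(\G,\sigma)$; $L^1(\G,\sigma)$ is reduced because that representation is faithful, and $C^*_r(\G,\sigma)$ is reduced being a $C^*$-algebra. With the invariant spectral radius of $C_c(\G,\sigma)_h$ established, Proposition \ref{same_C_env} applies and yields that $L^1(\G,\sigma)$ and $C^*_r(\G,\sigma)$ have the same $C^*$-envelope. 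Since a $C^*$-algebra is its own $C^*$-envelope, this gives $C^*(L^1(\G,\sigma)) = C^*_r(\G,\sigma)$, which is the first assertion of the theorem.

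Finally I would identify $C^*(L^1(\G,\sigma))$ with the full twisted groupoid $C^*$-algebra $C^*(\G,\sigma)$ in order to read off weak containment. This is the step that requires the most care and is the main (purely bookkeeping) obstacle: one must observe that $*$-representations of the Banach $*$-algebra $L^1(\G,\sigma)$ are in natural bijection with the $I$-norm bounded $*$-representations of $C_c(\G,\sigma)$. In one direction, every $*$-representation of $L^1(\G,\sigma)$ is contractive for the $I$-norm (the involution is $I$-isometric by the symmetric form of \eqref{eq:I-norm}, and $\|\pi(f)\|^2=\|\pi(f^{*_\sigma}\star_\sigma f)\|=r(\pi(f^{*_\sigma}\star_\sigma f))\le r_{L^1}(f^{*_\sigma}\star_\sigma f)\le\|f\|_I^2$), so it restricts to an $I$-norm bounded $*$-representation of $C_c(\G,\sigma)$; conversely every $I$-norm bounded $*$-representation of $C_c(\G,\sigma)$ extends to $L^1(\G,\sigma)$ by density. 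Hence the two defining suprema of norms coincide and $C^*(L^1(\G,\sigma))=C^*(\G,\sigma)$. Combining this with the previous paragraph gives $C^*(\G,\sigma)=C^*_r(\G,\sigma)$, which is precisely the weak containment property for $\G$ twisted by $\sigma$. Thus no further analytic estimate is needed beyond the propositions already proved.
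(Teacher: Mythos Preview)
Your proof is correct and follows essentially the same route as the paper: invoke Proposition \ref{prop:equivalence_quasi_symmetric_hermitian} to get invariant spectral radius of $C_c(\G,\sigma)_h$ in $(L^1(\G,\sigma),C^*_r(\G,\sigma))$, then apply Proposition \ref{same_C_env} to conclude that $C^*_r(\G,\sigma)$ is the $C^*$-envelope of $L^1(\G,\sigma)$, and finally identify this envelope with $C^*(\G,\sigma)$. You are more explicit than the paper in justifying the last identification (the bijection between $*$-representations of $L^1(\G,\sigma)$ and $I$-norm bounded $*$-representations of $C_c(\G,\sigma)$), which the paper leaves implicit in the phrase ``the reduced norm is the maximal norm''; your automatic-contractivity argument for $*$-representations of Banach $*$-algebras is the standard one and is a welcome clarification.
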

	\begin{proof}
	By 	Proposition \ref{prop:equivalence_quasi_symmetric_hermitian}, if $C_c(\G,\sigma)$ is quasi-Hermitian in $L^1(\G,\sigma,\lambda)$, then for every $f\in C_c(\G,\sigma)_h$ we have $r_{C^*_r(\G,\sigma)}(f)= r_{L^1(\G,\sigma)}(f)$. Therefore by Proposition \ref{same_C_env} we have that $C^*_r(\G,\sigma,\lambda)$ is the $C^*$-envelope of $L^1(\G,\sigma,\lambda)$. But this means that the reduced norm is the maximal norm, and  so $C^*_r(\G,\sigma, \lambda)=C^*(\G,\sigma, \lambda)$.
	\end{proof}

  
 
 Finally, we address the problem of spectral invariance. Recall that spectra in non-unital algebras are defined in terms of the spectra in their minimal unitizations. Since a Banach $*$-algebra $\A$ is Hermitian if and only if its minimal unitization is Hermitian \cite[Theorem (4.7.9)]{Rickart60}, we obtain the following corollary.
 
 \begin{corl}\label{Wiener_env} Let $\G$ be  a second-countable locally compact groupoid with Haar system $\lambda$ and $\sigma\in Z^2 (\G,\T)$. Then $L^1(\G,\sigma,\lambda)$ is Hermitian if and only if $L^1(\G,\sigma,\lambda)$ is spectrally invariant in $C_r^*(\G,\sigma,\lambda)$. 
 \end{corl}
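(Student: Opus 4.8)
The plan is to prove the two implications separately. Write $\A = L^1(\G,\sigma,\lambda)$ and $\B = C^*_r(\G,\sigma,\lambda)$; recall that $\A$ is a reduced Banach $*$-algebra that embeds continuously and densely into the $C^*$-algebra $\B$ via the $\sigma$-twisted left regular representation, which is a $*$-homomorphism. The whole argument rests on the list of equivalences (1)--(4) for reduced Banach $*$-algebras stated above, together with the identification of the $C^*$-envelope of $\A$ provided by Theorem~\ref{corl:quasi_symmetric_approx_theory}.

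For the direction that spectral invariance forces $\A$ to be Hermitian --- the easy one --- I would argue directly from the definitions. Given $f \in \A_h$, its image in $\B$ is self-adjoint because the embedding is a $*$-homomorphism, so $\Spec_\B(f) \subseteq \R$, since self-adjoint elements of a $C^*$-algebra have real spectrum. Spectral invariance of $\A$ in $\B$ then yields $\Spec_\A(f) = \Spec_\B(f) \subseteq \R$, and as $f \in \A_h$ was arbitrary this is exactly the statement that $\A$ is Hermitian.

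For the converse, suppose $\A$ is Hermitian. Then $\Spec_\A(f) \subseteq \R$ for every $f \in \A_h$, and in particular for every $f \in C_c(\G,\sigma)_h$; thus $C_c(\G,\sigma)$ is quasi-Hermitian in $\A$, i.e.\ $\G$ is $\sigma$-quasi-Hermitian. This is precisely the hypothesis of Theorem~\ref{corl:quasi_symmetric_approx_theory}, which I would invoke to conclude that $\B = C^*_r(\G,\sigma,\lambda)$ is the $C^*$-envelope $C^*(\A)$. Since $\A$ is Hermitian, the equivalence (1)$\Leftrightarrow$(3) from the preliminaries shows that $\A$ is spectrally invariant in $C^*(\A)$, and substituting the identification $C^*(\A) = \B$ gives spectral invariance of $\A$ in $C^*_r(\G,\sigma,\lambda)$, as desired.

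The only genuinely nontrivial step is this last appeal to Theorem~\ref{corl:quasi_symmetric_approx_theory}: a priori ``spectrally invariant in the $C^*$-envelope'' and ``spectrally invariant in the reduced $C^*$-algebra'' are different statements, and it is the quasi-Hermitian machinery that collapses the two by forcing $C^*(\A) = C^*_r(\G,\sigma,\lambda)$. The remaining care needed is bookkeeping around non-unitality: since $\A$ and $\B$ are typically non-unital, all spectra are read in minimal unitizations, so I would use that $\A$ is Hermitian if and only if $\A^+$ is Hermitian \cite{Rickart60} to ensure every spectral statement above is interpreted consistently.
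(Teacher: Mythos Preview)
Your proof is correct and follows essentially the same approach as the paper: for the forward direction you observe that Hermitian implies $\sigma$-quasi-Hermitian, invoke Theorem~\ref{corl:quasi_symmetric_approx_theory} to identify $C^*(\A)$ with $C^*_r(\G,\sigma)$, and then use the equivalence (1)$\Leftrightarrow$(3); for the converse you spell out explicitly what the paper calls ``trivial.'' The only difference is that you include more detail (in particular the intermediate step that Hermitian $\Rightarrow$ $\sigma$-quasi-Hermitian and the remark on unitizations), but the logical skeleton is identical.
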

 \begin{proof}
 	Suppose that $L^1(\G,\sigma)$ is Hermitian. Then $C^*(L^1(\G,\sigma))=C^*_r(\G,\sigma)$ by Theorem \ref{corl:quasi_symmetric_approx_theory}. But as $L^1 (\G,\sigma)$ is Hermitian, it must be spectrally invariant in its enveloping $C^*$-algebra, hence it is spectrally invariant in $C^*_r(\G,\sigma)$.
 	The converse implication is trivial. 
 	
 \end{proof}

 \section{Non-Hermitian ample amenable groupoids}\label{sec_non_herm}
 In the previous section we proved that if $\G$ is a second-countable locally compact quasi-Hermitian groupoid, then $\G$ satisfies the weak containment property, i.e.\ $C^*_r (\G) \cong C^* (\G)$. 
 In the case $\G$ is a 
 group, this translates to the fact that quasi-Hermitian groups are amenable. In the case of groupoids the situation is more subtle, since the weak containment property is not equivalent to $\G$ being amenable. There are examples of non-amenable groupoids with the weak containment property in \cite{AlFi,Will}. In this section we will see that when $\G$ is an ample groupoid, an obstruction to $\G$ being quasi-Hermitian is that the isotropy groups are not quasi-Hermitian. We can then easily construct an amenable ample groupoid that is not Hermitian. 
 
 

 \begin{defn}
 	A locally compact Hausdorff étale groupoid is called \emph{ample} if it has a basis consisting of open and compact bisections.
 \end{defn}

 Let $\G$ be  an ample groupoid and let $\sigma\in Z^2 (\G,\T)$. 
 Then, given $x\in \Go$, the restriction of $\sigma$ to the isotropy group $\G_x^x$ is a group $2$-cocycle. We denote this restricted $2$-cocycle by $\sigma_x$.

 \begin{prop}\label{non-herm}
 	Let $\G$ be a second-countable ample groupoid  and  $\sigma\in Z^2 (\G,\T)$, with $\Go$ compact. Suppose that for every $\gamma\in \Iso(\G)$ there exists a clopen bisection $U\subseteq \G$ such that $\gamma\in U$ and $r(U)=s(U)=\Go$. If $\G$ is $\sigma$-quasi-Hermitian, then $\G^x_x$ is a $\sigma_x$-quasi-Hermitian group for every $x\in \Go$.
 \end{prop}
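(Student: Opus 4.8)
The plan is to transport the spectral condition from $L^1(\G,\sigma)$ down to the twisted group algebra of $\G^x_x$ by means of restriction to the isotropy, and then to use a singly generated subalgebra together with a connectedness argument to finish. Fix $x\in\Go$ and a Hermitian $a=\sum_h a_h\delta_h\in\C[\G^x_x]_h$ (a finite sum); the goal is $\Spec_{\ell^1(\G^x_x,\sigma_x)}(a)\subseteq\R$. First I would produce a Hermitian lift $f\in C_c(\G,\sigma)_h$ of $a$ built from full bisections. For each $h$ occurring in $a$ the hypothesis provides a clopen bisection $U_h\ni h$ with $r(U_h)=s(U_h)=\Go$; since $\Go$ is compact and $r\colon U_h\to\Go$ is a homeomorphism, $U_h$ is compact, so $1_{U_h}\in C_c(\G,\sigma)$, and as $U_h$ is a bisection whose unique element over $x$ is $h\in\G^x_x$, one has $U_h\cap\G^x_x=\{h\}$. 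I set $f_0=\sum_h a_h 1_{U_h}$ and $f=\tfrac12(f_0+f_0^{*_\sigma})$, and let $\mathcal A_x\subseteq C_c(\G,\sigma)$ be the linear span of the functions $w\cdot 1_U$, where $U$ runs over full clopen bisections whose unique element over $x$ lies in $\G^x_x$ and $w\colon U\to\T$ is continuous; this is a $*$-subalgebra containing $f$.

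Next I would show that restriction to the isotropy, $R(g):=\sum_{h\in\G^x_x}g(h)\delta_h$, is a contractive $*$-homomorphism $R\colon\mathcal A_x\to\C[\G^x_x,\sigma_x]$ with $R(f)=a$. Contractivity is immediate from $\sum_{h\in\G^x_x}|g(h)|\le\int_{\G_x}|g|\,d\lambda_x\le\|g\|_I$. Multiplicativity is the combinatorial heart: for generators $w1_U,w'1_V$ and $g\in\G^x_x$, the unique factorization $g=\alpha\beta$ with $\alpha\in U$, $\beta\in V$ forces $r(\alpha)=r(g)=x$, so $\alpha$ is the $x$-range element of $U$, which lies in $\G^x_x$ by the standing assumption on $U$; then $r(\beta)=s(\alpha)=x=s(g)=s(\beta)$, so $\beta\in\G^x_x$ too, and $UV$ is again full by the source/range conditions. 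Evaluating, $R((w1_U)\star_\sigma(w'1_V))=R(w1_U)\star_{\sigma_x}R(w'1_V)$ because $\sigma_x=\sigma|_{\G^x_x}$. The same bookkeeping shows $R$ intertwines $*_\sigma$ and $*_{\sigma_x}$, whence $R(f)=\tfrac12(a+a^{*_{\sigma_x}})=a$. Extending by $L^1$-continuity yields a $*$-homomorphism $\overline R\colon\overline{\mathcal A_x}\to\ell^1(\G^x_x,\sigma_x)$ with $\overline R(f)=a$.

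Finally I would run the spectral argument. Since $f\in C_c(\G,\sigma)_h$ and $\G$ is $\sigma$-quasi-Hermitian, $\Spec_{L^1(\G,\sigma)}(f)\subseteq\R$; being a compact subset of the real line, its complement in $\C$ is connected. Let $B_f$ be the closed unital subalgebra of $L^1(\G,\sigma)^+$ generated by $f$. By the standard hole-filling description of the spectrum in a singly generated subalgebra, the absence of bounded components of the complement forces $\Spec_{B_f}(f)=\Spec_{L^1(\G,\sigma)}(f)\subseteq\R$. The unital extension of $\overline R$ restricts to a unital homomorphism $B_f\to\ell^1(\G^x_x,\sigma_x)^+$ sending $f\mapsto a$, and a unital homomorphism can only shrink spectra, so $\Spec_{\ell^1(\G^x_x,\sigma_x)}(a)\subseteq\Spec_{B_f}(f)\subseteq\R$. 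As $a\in\C[\G^x_x]_h$ was arbitrary, $\G^x_x$ is $\sigma_x$-quasi-Hermitian.

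The step I expect to be the main obstacle is verifying that $R$ is genuinely multiplicative and $*$-preserving on $\mathcal A_x$. This is precisely where the hypotheses enter: the existence of a full clopen bisection through every isotropy element makes $R$ surjective onto $\C[\G^x_x,\sigma_x]$ and legitimizes the lift, while the range condition $r(\alpha)=x$ is what pins the unique bisection factorization of an isotropy element inside $\G^x_x$. The twist adds the bookkeeping of carrying the unimodular weights $w$ through the convolution and checking that the cocycle contributions collapse to $\sigma_x$, and compactness of $\Go$ is what guarantees that the full bisections have compact (hence admissible) indicators $1_U$.
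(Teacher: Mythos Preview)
Your argument is correct. Both proofs share the same core constructions --- lifting a Hermitian element of $\C[\G^x_x,\sigma_x]$ to $C_c(\G,\sigma)$ via full clopen bisections, and restricting to $\G^x_x$ to push spectral information back down --- but they diverge in how the spectral comparison is made. The paper argues by contrapositive and shows directly that if $(\lambda 1_{\Go}-\hat f)^{-1}$ existed in $\ell^1(\G,\sigma)$, its restriction to $\G^x_x$ would invert $\lambda 1-f$; this uses that restriction is multiplicative whenever \emph{one} factor is supported on full bisections through $\G^x_x$, without constraining the other factor. You instead confine yourself to the closed subalgebra $\overline{\mathcal A_x}$ on which restriction is an honest $*$-homomorphism, and then compensate for the possible spectral enlargement in passing to a subalgebra by invoking the hole-filling description of $\Spec_{B_f}(f)$ for singly generated $B_f$, together with the connectedness of $\C\setminus\Spec_{\ell^1(\G,\sigma)}(f)$. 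This trades the paper's hands-on verification for a standard Banach-algebra lemma, and your explicit symmetrization $f=\tfrac12(f_0+f_0^{*_\sigma})$ also sidesteps any question of whether the lift is Hermitian. One small remark: since $\Go$ is compact and $\G$ is ample, both $\ell^1(\G,\sigma)$ and $\ell^1(\G^x_x,\sigma_x)$ are already unital, so the unitizations $(\cdot)^+$ are unnecessary and $\overline R$ already sends $1_{\Go}\mapsto\delta_x$.
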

 \begin{proof} Let  us suppose that there exists $x\in \Go$ such that $\G^x_x$ is not $\sigma_x$-quasi-Hermitian. Then there exists $f\in C_c(\G^x_x,\sigma_x)_h$ such that $\Spec_{\ell^1(\G^x_x,\sigma_x)}(f)\nsubseteq \R$. Observe that $f$ must be of the form 
 	$$f=\sum_{i=1}^m \lambda_i \delta_{\gamma_i}+\sum_{i=1}^m \overline{\lambda_i\sigma(\gamma_i^{-1},\gamma_i)} \delta_{\gamma^{-1}_i}\,,$$
where $\lambda_i\in \C$  and $\gamma_i\in \G^x_x$.	
 	 Let $\lambda \in \Spec_{\ell^1(\G^x_x,\sigma_x)}(f)\setminus \R$. By assumption,  for every $\gamma\in \G_x^x$ there exists a bisection $U_\gamma\subseteq \G$ such that $\gamma\in U_\gamma$ and $r(U_\gamma)=s(U_\gamma)=\Go$. Then $\hat{f}=\sum_{i=1}^m\lambda_i1_{U_{\gamma_i}}+\sum_{i=1}^m \overline{\lambda_i\sigma(\gamma_i^{-1},\gamma_i)}1_{U^{-1}_{\gamma_i}}\in C_c(\G,\sigma)_h$. We claim that $\lambda\in \Spec_{\ell^1(\G,\sigma)}(\hat{f})$, that is $\lambda 1_{\Go} - \hat{f}$ is not invertible in $\ell^1(\G,\sigma)$.  Suppose that $\lambda 1_{\Go} - \hat{f}$ is  invertible in $\ell^1(\G,\sigma)$, so there exists $\hat{g}\in \ell^1(\G,\sigma)$ such that $(\lambda 1_{\Go} - \hat{f})\star_\sigma \hat{g}=1_{\Go}$. Let $\{\hat{g}_n\}_{n=1}^\infty$  be a sequence in $C_c(\G,\sigma)$ such that $\hat{g}_n\to \hat{g}$ in $\ell^1(\G,\sigma)$, and hence $(\lambda 1_{\Go} - \hat{f})\star_\sigma \hat{g}_n\to 1_{\Go}$ in $\ell^1(\G,\sigma)$. In particular, $((\lambda 1_{\Go} - \hat{f})\star_\sigma \hat{g}_n)(x)\to 1$ and $((\lambda 1_{\Go} - \hat{f})\star_\sigma \hat{g}_n)(\gamma)\to 0$ 
 for every $\gamma\in \G_x^x\setminus \{x\}$. Let $\hat{g}_n=\sum_{j=1}^{l_n} \beta_{j,n} 1_{V_{j,n}}$ where the $V_{j,n}$'s are compact open bisections. Then  
 	\begin{align*}
 	(\lambda 1_{\Go} - \hat{f})\star_\sigma \hat{g}_n & =(\lambda 1_{\Go} - \hat{f})\star_\sigma (\sum_{j=1}^{l_n} \beta_{j,n} 1_{V_{j,n}}) \\
 	& = \sum_{j=1}^{l_n} \lambda \beta_{j,n} 1_{V_{j,n}} - \sum_{j=1}^{l_n} \beta_{j,n} (\hat{f}\star_\sigma  1_{V_{j,n}})\,. \\
 	\end{align*}
 	Therefore, defining $\eta_{j,n}:=xV_{j,n}x\in \G^x_x$  and $g_n=\sum_{j=1}^{l_n} \beta_{j,n} \delta_{\eta_{j,n}}\in \ell^1(\G^x_x,\sigma_x)$, we have that the sequence $\{g_n\}_{n=1}^\infty$ converges in $\ell^1(\G^x_x,\sigma_x)$ because $\{\hat{g}_n\}_{n=1}^\infty$ converges in $\ell^1(\G,\sigma)$, and
 	$$(\lambda1 -f )\star_{\sigma_x} g_n\to (\lambda1 -f )\star_{\sigma_x} g=1\,.$$ 
 	Therefore $\lambda\notin \Spec_{\ell^1(\G^x_x,\sigma_x)} (f)$, a contradiction.		
 \end{proof}

 \begin{rmk}
 It was observed in \cite[Lemma 4.9]{NyOrt} that the condition that for every $\gamma\in \Iso(\G)$ there exists a clopen bisection $U\subset \G$ such that $\gamma\in U$ and $r(U)=s(U)=\Go$ is satisfied if $|\text{Orb}_\G(x)|\geq 2$ for every $x\in \Go$.
 
 \end{rmk}
 \begin{exmp}
 	Willett  constructed in \cite{Will} a second-countable locally compact ample groupoid $\G$ with $\Go$ compact that satisfies the weak-containment property. $\G$ is a group bundle so it clearly satisfies the assumptions in Proposition \ref{non-herm}. Moreover, $\G$ has an isotropy group isomorphic to the free non-abelian group with two generators, which is not quasi-Hermitian by \cite[Corollary 4.8]{SaWi}. 
 	Therefore, by Proposition \ref{non-herm} we have that $\G$ is not quasi-Hermitian. 
 \end{exmp}
 
 \begin{exmp}
 	Let $\Gamma$ be a countable discrete group with unit $e$, and let us consider an action of $\Gamma$ on a second-countable compact Hausdorff space $X$. 
 	Then $X\rtimes \Gamma$ is a second-countable locally compact Hausdorff étale groupoid.
 	Let us suppose that $\Gamma$ contains a free semigroup on two generators $z,t$. Given $\gamma\in \Gamma$ we define the bisection  $U_\gamma=(X,\gamma)$ of  $X\rtimes \Gamma$. Observe that given $\gamma,\gamma'\in \Gamma$ we have that $U_\gamma U_{\gamma'}=U_{\gamma\gamma'}$.  Let us consider $$f=a_0 1_{U_e}+a_1 1_{U_z}+a_21_{U_{z^2}}\in C_c(X\rtimes \Gamma)\,,$$
 	where $a_0,a_1,a_2\in \C$ satisfy  $|a_0|=|a_1|=|a_2|=\frac{1}{3}$ and 
 	$$\sup\{|a_0+a_1x+a_2x^2|:x\in \mathbb{T} \}<1\,.$$
 	Observe that $1_{U_e}$ is the unit of $\ell^1(X\rtimes \Gamma)$. We then have
 	$$r_{\ell^1(X\rtimes \Gamma)}(f)<1\,,$$
 	by using the spectral mapping theorem and maximum modulus principle,
	and since $f$ is normal, i.e. $ff^*=f^*f$, we obtain
 	$$\|f\|_{C^*_r(X\rtimes \Gamma)}=r_{C^*_r(X\rtimes \Gamma)}(f)<1\,.$$
 	Now, since $1_{U_t}$ is a unitary in $C^*(X\rtimes \Gamma)$ it follows that
 	$$\|f\star_\sigma 1_{U_t}\|=\|a_0 1_{U_t}+a_11_{U_{zt}}+a_21_{U_{z^2t}}\|\,.$$
 	Then since $t,z$ generate a free non-abelian group we have that 
 	$$(a_0 1_{U_t}+a_11_{U_{zt}}+a_21_{U_{z^2t}})^n$$
 	has $3^n$ linearly independent terms. Hence, since $U_\gamma\cap U_{\gamma'}=\emptyset$ if and only if $\gamma\neq \gamma'$, we have that
 	\begin{align*}
 	\|(a_0 1_{U_t}+a_11_{U_{zt}}+a_21_{U_{z^2t}})^n\|_{\ell^1(X\rtimes \Gamma)} &=1
 	\end{align*}
 	for every $n\in \N$, and so $r_{\ell^1(X \rtimes \Gamma)}(a_0 1_{U_t}+a_11_{U_{zt}}+a_21_{U_{z^2t}})=1$. Thus we have that 
 	$$\Spec_{\ell^1(X\rtimes \Gamma)}(a_0 1_{U_t}+a_11_{U_{zt}}+a_21_{U_{z^2t}})\neq \Spec_{C^*_r(X\rtimes \Gamma)}(a_0 1_{U_t}+a_11_{U_{zt}}+a_21_{U_{z^2t}})\,,$$
 	and hence by Proposition \ref{prop:equivalence_quasi_symmetric_hermitian} we have that $X\rtimes \Gamma$ is not quasi-Hermitian. 
 	
 	Then, let $\mathbb{F}_2$ be the non-abelian free group with two generators. It is known that there exists a locally compact space $X$ and an amenable action of $\mathbb{F}_2$ on $X$ (see for example \cite{Suz}). Hence, $X\rtimes \mathbb{F}_2$ is an amenable groupoid but not quasi-Hermitian.  
 	
 \end{exmp}

\section{Hermitian twisted groupoid Banach $*$-algebras}\label{sec_exa_herm_twist}
Let $\G$ be a second-countable locally compact groupoid with Haar system $\lambda$ and let $\sigma\in Z^2 (\G,\T)$. 
In this section we give a sufficient condition for the Banach $*$-algebra $L^1 (\G, \sigma,\lambda)$ to be Hermitian. As a consequence we are able to give conditions for twisted transformation groupoids so that the associated twisted transformation groupoid Banach $*$-algebras are Hermitian.
	
	 \begin{defn}
		Given  a second-countable locally compact groupoid $\G$ with Haar system $\lambda$ and $\sigma\in Z^2 (\G,\T)$, we define the \emph{twisted groupoid} $\G_\sigma$ to be the groupoid $\G\times \mathbb{T}$ with product topology and operations defined by
		$$(\gamma_1,z_1)\cdot (\gamma_2,z_2)=(\gamma_1\gamma_2,z_1z_2\overline{\sigma(\gamma_1,\gamma_2)}) \qquad \text{if }(\gamma_1, \gamma_2 )\in \G^{(2)}\,,$$
		and 
		$$(\gamma,z)^{-1}=(\gamma^{-1},\overline{z}\sigma(\gamma,\gamma^{-1}))\,.$$
		The Haar system of $\G_\sigma$ is the one given by $\lambda\times\eta=\{\lambda_x\times \eta\}_{x\in \Go}$, where $\eta$ is the normalized Lebesgue measure on $\mathbb{T}$.    
		
	\end{defn}

	\begin{prop}\label{prop_inc_iso}
		Let $\G$ be  a second-countable locally compact groupoid with Haar system $\lambda$ and $\sigma\in Z^2 (\G,\T)$. The  map 
		$$j:L^1(\G,\sigma,\lambda)\to L^1(\G_\sigma,\lambda\times\eta)\,,$$
		given by $j(f)(\gamma,z)=zf(\gamma)$ is an isometric $*$-homomorphism.
	\end{prop}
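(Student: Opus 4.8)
The plan is to verify directly that $j$ respects all the relevant structure on the dense $*$-subalgebra $C_c(\G,\sigma)$ and then extend by continuity. First I would record the groupoid data of $\G_\sigma$ needed for the computations. A short calculation from the definitions gives $r(\gamma,z)=(r(\gamma),1)$ and $s(\gamma,z)=(s(\gamma),1)$, so the unit space is $\{(x,1):x\in\Go\}\cong\Go$, the source fibre over $(x,1)$ is $\G_x\times\T$, and the relevant Haar measure there is $\lambda_x\times\eta$. Since $\T$ is compact, $j(f)(\gamma,z)=zf(\gamma)$ is continuous with compact support whenever $f\in C_c(\G,\sigma)$, so $j$ maps $C_c(\G,\sigma)$ into $C_c(\G_\sigma)$, and it suffices to check isometry, multiplicativity and the $*$-property there.

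For isometry I would simply unwind the (untwisted) $I$-norm of $\G_\sigma$, which is \eqref{eq:I-norm} for trivial cocycle, over the fibres $\G_x\times\T$. Since $|j(f)(\gamma,z)|=|f(\gamma)|$ and $|j(f)((\gamma,z)^{-1})|=|f(\gamma^{-1})|$ are independent of $z$, and $\eta(\T)=1$, the $z$-integral contributes a factor $1$ and each fibrewise integral collapses to the corresponding integral in \eqref{eq:I-norm}. Hence $\|j(f)\|_I=\|f\|_I$.

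The multiplicativity is the main computation and the step I expect to be most delicate. Writing out the untwisted convolution on $\G_\sigma$ over the fibre $\G_{s(\gamma)}\times\T$, the product and inverse of $\G_\sigma$ produce, inside $j(f)\star j(g)$ evaluated at $(\gamma,z)$, the integrand $z\,\overline{w}\,\sigma(\mu,\mu^{-1})\,\overline{\sigma(\gamma,\mu^{-1})}\,f(\gamma\mu^{-1})\,w\,g(\mu)$. Performing the $\T$-integration first, the identity $\overline{w}w=1$ on $\T$ together with $\eta(\T)=1$ removes the $w$-dependence and leaves $z$ times an integral over $\G_{s(\gamma)}$. Matching this with $j(f\star_\sigma g)(\gamma,z)=z\,(f\star_\sigma g)(\gamma)$ reduces the whole claim to the pointwise cocycle identity
\[
\sigma(\mu,\mu^{-1})\,\overline{\sigma(\gamma,\mu^{-1})}=\sigma(\gamma\mu^{-1},\mu),\qquad \gamma,\mu\in\G_{s(\gamma)}.
\]
I would prove this from \eqref{eq:2-cocycle-associativity-condition} applied to $(\gamma,\mu^{-1},\mu)$, which together with $\mu^{-1}\mu=s(\mu)$ and the normalization \eqref{eq:2-cocycle-unit-condition} yields $\sigma(\gamma,\mu^{-1})\sigma(\gamma\mu^{-1},\mu)=\sigma(\mu^{-1},\mu)$. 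Combining this with the symmetry $\sigma(\mu,\mu^{-1})=\sigma(\mu^{-1},\mu)$ (obtained from \eqref{eq:2-cocycle-associativity-condition} applied to $(\mu,\mu^{-1},\mu)$ and \eqref{eq:2-cocycle-unit-condition}), and using that $\sigma$ is $\T$-valued, gives the displayed identity.

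Finally, for the $*$-property I would compute the untwisted involution $j(f)^{*}(\gamma,z)=\overline{j(f)((\gamma,z)^{-1})}=z\,\overline{\sigma(\gamma,\gamma^{-1})}\,\overline{f(\gamma^{-1})}$ and compare it with $j(f^{*_\sigma})(\gamma,z)=z\,f^{*_\sigma}(\gamma)=z\,\overline{\sigma(\gamma^{-1},\gamma)}\,\overline{f(\gamma^{-1})}$ from \eqref{eq:twisted-inv}; equality is again exactly the symmetry $\sigma(\gamma,\gamma^{-1})=\sigma(\gamma^{-1},\gamma)$ established above. Having checked that $j$ is an isometric $*$-homomorphism on the dense $*$-subalgebra $C_c(\G,\sigma)$, and since convolution and involution are continuous, it extends by continuity to an isometric $*$-homomorphism $L^1(\G,\sigma,\lambda)\to L^1(\G_\sigma,\lambda\times\eta)$, completing the proof.
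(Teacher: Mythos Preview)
Your proof is correct and follows essentially the same approach as the paper: both verify multiplicativity, the $*$-property, and isometry by direct computation, reducing the first two to the same cocycle identities $\sigma(\gamma,\mu^{-1})\sigma(\gamma\mu^{-1},\mu)=\sigma(\mu^{-1},\mu)$ and $\sigma(\mu,\mu^{-1})=\sigma(\mu^{-1},\mu)$ obtained from \eqref{eq:2-cocycle-associativity-condition} and \eqref{eq:2-cocycle-unit-condition}. The only differences are expository: you organize the argument on $C_c$ and extend by continuity, and you check isometry first, whereas the paper works formally in $L^1$ throughout and checks multiplicativity first.
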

	\begin{proof}
		First we prove that $j$ is a $*$-homomorphism. Fix $f,g\in L^1(\G,\sigma)$, $\gamma\in \G$ and $z\in \mathbb{T}$. Then we have that 
		\begin{align*}
		j(f\star_\sigma g)(\gamma,z) & = z(f\star_\sigma g)(\gamma)=z\int_{\G_{s(\gamma)}} \sigma(\gamma\mu^{-1},\mu)f(\gamma\mu^{-1})g(\mu)\, d\lambda_{s(\gamma)}(\mu) \\
		& =\int_{\G_{s(\gamma)}} z\overline{\sigma(\gamma,\mu^{-1})}\sigma(\mu,\mu^{-1}) f(\gamma\mu^{-1}) g(\mu)\,d\lambda_{s(\gamma)} (\mu) dt  \\
		& =\int_{\mathbb{T}}\int_{\G_{s(\gamma)}} z\overline{\sigma(\gamma,\mu^{-1})}\sigma(\mu,\mu^{-1}) f(\gamma\mu^{-1}) g(\mu)\,d\lambda_{s(\gamma)} (\mu) dt \\
		& =\int_{\mathbb{T}}\int_{\G_{s(\gamma)}} \overline{t\sigma(\gamma,\mu^{-1})}z  \sigma(\mu,\mu^{-1}) f(\gamma\mu^{-1}) t g(\mu)\,d\lambda_{s(\gamma)} (\mu) dt\\
		& =\int_{\mathbb{T}}\int_{\G_{s(\gamma)}} j(f)(\gamma\mu^{-1}, \overline{t\sigma(\gamma,\mu^{-1})}z\sigma(\mu,\mu^{-1})) j(g)(\mu,t)\,d\lambda_{s(\gamma)} (\mu) dt \\
		& =\int_{\mathbb{T}}\int_{\G_{s(\gamma)}} j(f) ((\gamma,z)(\mu,t)^{-1}) j(g)(\mu,t)\,d\lambda_{s(\gamma)} (\mu) dt \\
		& =(j(f)\star j(g))(\gamma,z)\,,
		\end{align*}  	 
		where, at the third equality, we have used $\sigma(\gamma \mu^{-1}, \mu)\sigma(\gamma, \mu^{-1}) = \sigma(\mu^{-1}, \mu)$. This identity follows from \eqref{eq:2-cocycle-associativity-condition} by using $\gamma$, $\mu^{-1}$, $\mu$ instead of $\alpha$, $\beta$, $\gamma$ and then applying \eqref{eq:2-cocycle-unit-condition}. We then use that $\sigma(\mu^{-1}, \mu) = \sigma(\mu, \mu^{-1})$, which follows from \eqref{eq:2-cocycle-associativity-condition} using $\mu$, $\mu^{-1}$, $\mu$ instead of $\alpha$, $\beta$, $\gamma$ and then applying \eqref{eq:2-cocycle-unit-condition}.
		
		Moreover, 
		\begin{align*}
		j(f^{*_\sigma})(\gamma,z) & =zf^{*_\sigma}(\gamma)=z\overline{\sigma(\gamma,\gamma^{-1}) f(\gamma^{-1})} \\
		& = \overline{j(f)(\gamma^{-1},\overline{z}\sigma(\gamma,\gamma^{-1}))}\\
		& = \overline{j(f)((\gamma, z)^{-1})} \\
		& = j(f)^* (\gamma, z) \,.
		\end{align*}
		Finally,
		\begin{align*}
		\int_{\G_x} |f(\gamma)|\, d\lambda_x(\gamma) & =  \int_{\mathbb{T}}|z|\int_{\G_x} |f(\gamma)|\, d\lambda_x(\gamma)\,dz \\
		& =  \int_{\mathbb{T}}\int_{\G_x} |zf(\gamma)|\,d\lambda_x(\gamma)\,dz \\
		& =  \int_{\mathbb{T}}\int_{\G_x} |j(f)(\gamma,z)|\, d\lambda_x(\gamma)\,dz
		\end{align*}
		and 
		\begin{align*}
		\int_{\G_x} |f(\gamma^{-1})|\, d\lambda_x(\gamma) & =  \int_{\mathbb{T}}|\overline{z}|\int_{\G_x} |f(\gamma^{-1})|\, d\lambda_x(\gamma)\,dz \\
		& =  \int_{\mathbb{T}}\int_{\G_x} |\overline{z}\sigma(\gamma,\gamma^{-1})f(\gamma^{-1})|\,d\lambda_x(\gamma)\,dz \\
		& =  \int_{\mathbb{T}}\int_{\G_x} |j(f)(\gamma^{-1},\overline{z}\sigma(\gamma,\gamma^{-1}))|\, d\lambda_x(\gamma)\,dz \\
		& =  \int_{\mathbb{T}}\int_{\G_x} |j(f)((\gamma,z)^{-1}))|\, d\lambda_x(\gamma)\,dz\,.
		\end{align*}
	 Therefore, 
		\begin{align*}\|f\|_I & =\sup_{x \in \Go} \max \left\lbrace\int_{\G_x} |f(\gamma)|\, d\lambda_x(\gamma)\,,\int_{\G_x} |f(\gamma^{-1})|\, d\lambda_x(\gamma)\right\rbrace    \\
		& =\sup_{x \in \Go} \max\left\lbrace \int_{\mathbb{T}}\int_{\G_x} |j(f)(\gamma,z)|\, d\lambda_x(\gamma)\,dz\,,\int_{\mathbb{T}}\int_{\G_x} |j(f)((\gamma,z)^{-1})|\, d\lambda_x(\gamma)\,dz\right\rbrace \\
		&=\|j(f)\|_I\,.
		\end{align*}	
	\end{proof}

	\begin{prop}\label{thm_herm}
		Let $\G$ be  a second-countable locally compact groupoid with Haar system $\lambda$ and $\sigma\in Z^2 (\G,\T)$. Suppose that $L^1(\G_\sigma)$ is Hermitian. Then $L^1(\G,\sigma)$ is Hermitian.
	\end{prop}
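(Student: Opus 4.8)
The plan is to exhibit $L^1(\G,\sigma)$ as a spectrally invariant $*$-subalgebra of $L^1(\G_\sigma)$ via the embedding $j$ of Proposition~\ref{prop_inc_iso}, and then to transport the reality of spectra. Since $j$ is an isometric $*$-homomorphism, a Hermitian $f=f^{*_\sigma}\in L^1(\G,\sigma)$ is carried to a Hermitian element $j(f)\in L^1(\G_\sigma)$; as $L^1(\G_\sigma)$ is Hermitian we get $\Spec_{L^1(\G_\sigma)}(j(f))\subseteq\R$. Everything then reduces to the spectral identity
$$\Spec_{L^1(\G,\sigma)}(f)=\Spec_{L^1(\G_\sigma)}(j(f)),$$
after which $\Spec_{L^1(\G,\sigma)}(f)\subseteq\R$ for all $f\in L^1(\G,\sigma)_h$, so that $L^1(\G,\sigma)$ is Hermitian by definition.

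To obtain this identity I would build a one-sided inverse for $j$. The circle $\T$ inside $\G_\sigma$ induces a Fourier decomposition in the last variable: for $F\in L^1(\G_\sigma)$ set
$$E(F)(\gamma,z)=z\int_\T F(\gamma,w)\overline{w}\,d\eta(w).$$
First I would check that $E$ is a contraction for the $I$-norm (the inner average only decreases the fiberwise $1$-norm) and that its range is exactly $j(L^1(\G,\sigma))$, with $E\circ j=\mathrm{id}$; the latter is immediate because $j(f)(\gamma,z)=zf(\gamma)$ is already homogeneous of degree one in $z$.

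The crucial, and only non-formal, step is that $E$ is multiplicative, equivalently that the spectral subspaces $\B_m=\{F: F(\gamma,wz)=w^mF(\gamma,z)\}$ multiply diagonally: $\B_m\star\B_k=0$ for $m\neq k$ and $\B_m\star\B_m\subseteq\B_m$. I would verify this on homogeneous functions $F(\gamma,z)=f(\gamma)z^m$, $H(\gamma,z)=h(\gamma)z^k$: carrying out the convolution on $\G_\sigma$ and integrating the resulting factor $\overline{w}^{\,m}w^{k}$ over $\T$ produces the Kronecker delta $\delta_{m,k}$, while on the diagonal the cocycle factor collapses to $\sigma(\gamma\mu^{-1},\mu)^m$ by exactly the identity used in Proposition~\ref{prop_inc_iso} (derived from \eqref{eq:2-cocycle-associativity-condition} and \eqref{eq:2-cocycle-unit-condition}). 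Thus $E(F\star H)=E(F)\star E(H)$ on a dense set, and this extends to all of $L^1(\G_\sigma)$ by continuity of $E$ and of the product. Consequently $E$ is a contractive idempotent $*$-homomorphism onto $j(L^1(\G,\sigma))$ and a left inverse to $j$.

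Finally I would pass to minimal unitizations, where $j^+$ and $E^+$ are unital homomorphisms with $E^+\circ j^+=\mathrm{id}$. Since a unital homomorphism can only shrink spectra, $j^+$ gives $\Spec_{L^1(\G_\sigma)}(j(f))\subseteq\Spec_{L^1(\G,\sigma)}(f)$, while applying $E^+$ to an inverse of $\lambda 1-j(f)$ yields the reverse inclusion; together they give the spectral identity. I expect the main obstacle to be precisely the multiplicativity of $E$: one must carry out the cocycle bookkeeping carefully and justify the passage from compactly supported (or $z$-polynomial) functions to general $L^1$ elements, since Fourier series in the $\T$-variable need not converge in $L^1$. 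The clean way around this is to phrase the argument through the two bounded bilinear maps $(F,H)\mapsto E(F\star H)$ and $(F,H)\mapsto E(F)\star E(H)$ agreeing on a dense $*$-subalgebra, rather than through a termwise decomposition of $L^1(\G_\sigma)$.
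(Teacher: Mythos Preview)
Your argument is correct, but it takes a different and more hands-on route than the paper. The paper's proof is a single sentence: since $j$ is an isometric $*$-embedding (Proposition~\ref{prop_inc_iso}), $L^1(\G,\sigma)$ sits as a \emph{closed} Banach $*$-subalgebra of $L^1(\G_\sigma)$, and one then invokes the general fact (cited as \cite[Proposition~7.10]{Bill}) that any closed $*$-subalgebra of a Hermitian Banach $*$-algebra is again Hermitian. The underlying reason is that the spectral radius $r(b)=\lim\|b^n\|^{1/n}$ depends only on the norm, so it is unchanged under isometric embedding; Ptak's characterization ($\A$ Hermitian iff $r_\A(a)\le r_\A(a^*a)^{1/2}$ for all $a$) then transfers Hermitianness down automatically.

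Your approach instead exploits the specific structure of $\G_\sigma=\G\times\T$: the Fourier projection $E$ onto the degree-one component is not merely a conditional expectation but a genuine $*$-homomorphism, because the spectral subspaces $\B_m$ multiply diagonally (you correctly identify the relevant cocycle identity and the $\int_\T w^{k-m}\,dw=\delta_{m,k}$ mechanism). Having a unital $*$-homomorphic left inverse $E^+$ to $j^+$ gives the full equality $\Spec_{L^1(\G,\sigma)}(f)=\Spec_{L^1(\G_\sigma)}(j(f))$ for \emph{all} $f$, not just self-adjoint ones. This is a slightly stronger conclusion, obtained without appeal to an external structural theorem, and your care about extending multiplicativity of $E$ via bounded bilinear maps (rather than an $L^1$-convergent Fourier decomposition) is exactly right. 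The trade-off is length: the paper's citation handles any closed $*$-subalgebra in one stroke, whereas your argument is tailored to this particular embedding.
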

	\begin{proof}
		By Proposition \ref{prop_inc_iso}  $L^1(\G,\sigma)$ is a closed Banach $*$-subalgebra of $L^1(\G_\sigma)$. Then by \cite[Proposition 7.10]{Bill} $L^1(\G,\sigma)$ is Hermitian. 
	\end{proof}

	\begin{exmp}\label{exampl_trans}
	Let $X$ be a second-countable locally compact Hausdorff space, and let $\Gamma$ be a second-countable locally compact group acting on $X$ by homeomorphisms. Let $\G=X\rtimes \Gamma$ be the transformation groupoid, which is locally compact.  Recall that $\G$ is étale if and only if $\Gamma$ is discrete. Let $\sigma\in Z^2 (\Gamma,\T)$, and extend $\sigma$ to a $2$-cocycle of $X\rtimes \Gamma$ as shown in Example \ref{example_trans}.
		We define $\Gamma_\sigma$ to be the group that is $\Gamma\times \mathbb{T}$ with the product topology, and  
	$$(\gamma_1,z_1)(\gamma_2,z_2)=(\gamma_1\gamma_2,z_1z_2\overline{\sigma(\gamma_1,\gamma_2)})\,,$$
	 	for every $z_1,z_2\in \mathbb{T}$ and $\gamma_1,\gamma_2\in \Gamma$.   
	 	Now if we define the action of $\Gamma_\sigma$ on $X$ by 
	 	$$(\gamma,z)\cdot x:=\gamma\cdot x\,,$$
	 	then the transformation groupoid $X\rtimes \Gamma_\sigma$ is isomorphic to $(X\rtimes \Gamma)_\sigma$.

	\end{exmp}

	Let $X$ be a second-countable locally compact Hausdorff space, and let $\Gamma$ be a second-countable locally compact  group with modular function $\Delta$, acting on $X$ by homeomorphisms. Further, let $\sigma\in Z^2 (\Gamma,\T)$, and let $C_0(X)$ be the Banach $*$-algebra of continuous functions on $X$ that vanish at infinity with the supremum norm $\|\cdot \|_\infty$. Then $\Gamma$ acts on $C_0(X)$ by $\gamma\cdot f(x)=f(\gamma^{-1}\cdot x)$ for every $f\in C_0(X)$ and $\gamma\in \Gamma$. Let us define the \emph{generalized $L^1$-algebra $L^1(\Gamma,C_0(X),\sigma)$} to be the completion of 
	$$C_c(\Gamma,C_0(X),\sigma)=\{f:\Gamma\to C_0(X): \text{$f$ is continuous with compact support}\}$$
	with respect to the norm 
	$$\|f\|:=\int_\Gamma \|f(\gamma)\|_\infty \,d\lambda(\gamma)\,,$$
	where $\lambda$ is a left Haar measure of $\Gamma$.
	Then $L^1(\Gamma,C_0(X),\sigma)$ becomes a Banach $*$-algebra with  the operations 
		 $$(f\star_\sigma g)(\gamma)(x)=\int_\Gamma \sigma(\gamma\mu^{-1},\mu) f(\gamma\mu^{-1})(\mu\cdot x)g(\mu)(x)\,d\lambda(\mu)\,,$$
		 and 
		 $$(f^{*_\sigma})(\gamma)(x)=\Delta(\gamma^{-1})\overline{\sigma(\gamma,\gamma^{-1} )f(\gamma^{-1})(\gamma\cdot x)}\,,$$
		 for every $f,g\in L^1(\Gamma,C_0(X),\sigma)$, $\gamma\in \Gamma$ and $x\in X$. 
		 
		If $\sigma$ is the trivial twist, we denote  $L^1(\Gamma,C_0(X),\sigma)$ by $L^1(\Gamma,C_0(X))$. The $C^*$-envelope of $L^1(\Gamma,C_0(X),\sigma)$ is the twisted crossed product $C^*$-algebra $C_0(X)\rtimes^\sigma \Gamma$.

		 \begin{lemma}\label{lemma_iso} Let $X$ be a second-countable locally compact Hausdorff space, and let $\Gamma$ be a second-countable locally compact unimodular group ($\Delta\equiv 1$) acting on $X$ by homeomorphisms. Let $\sigma\in Z^2 (\Gamma,\T)$. Then there exists a surjective  $*$-homomorphism $\Phi :L^1(\Gamma, C_0(X),\sigma) \to  L^1(X\rtimes \Gamma, \sigma)$.  Consequently, if $L^1(\Gamma, C_0(X),\sigma)$ is Hermitian, then  so is $L^1(X\rtimes \Gamma, \sigma)$.
		 \end{lemma}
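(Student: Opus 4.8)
The plan is to realise $L^1(X\rtimes\Gamma,\sigma)$ as a homomorphic image of $L^1(\Gamma,C_0(X),\sigma)$ via the obvious ``uncurrying'' map and then to transport the symmetric property along it. Identifying the unit space of $X\rtimes\Gamma$ with $X$ and writing a groupoid element as $(x,\gamma)$, I would set $\Phi(f)(x,\gamma):=f(\gamma)(x)$ for $f\in C_c(\Gamma,C_0(X),\sigma)$. The first task is to verify that $\Phi$ is a $*$-homomorphism. For multiplicativity one parametrises $\G_{(x,e)}$ by $\Gamma$, computes $(x,\gamma)(x,\mu)^{-1}=(\mu\cdot x,\gamma\mu^{-1})$, and notes that the extended cocycle gives $\sigma\big((\mu\cdot x,\gamma\mu^{-1}),(x,\mu)\big)=\sigma(\gamma\mu^{-1},\mu)$; then the groupoid convolution \eqref{eq:twisted-conv} of $\Phi(f),\Phi(g)$ at $(x,\gamma)$ is literally the crossed-product convolution of $f,g$ evaluated at $\gamma$ and then at $x$, with no further cocycle manipulation needed. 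For the involution I would use unimodularity decisively: since $\Delta\equiv 1$ the crossed-product adjoint is $f^{*_\sigma}(\gamma)(x)=\overline{\sigma(\gamma,\gamma^{-1})}\,\overline{f(\gamma^{-1})(\gamma\cdot x)}$, whereas the groupoid involution \eqref{eq:twisted-inv} at $(x,\gamma)$, using $(x,\gamma)^{-1}=(\gamma\cdot x,\gamma^{-1})$, produces $\overline{\sigma(\gamma^{-1},\gamma)}\,\overline{f(\gamma^{-1})(\gamma\cdot x)}$. These agree because $\sigma(\gamma,\gamma^{-1})=\sigma(\gamma^{-1},\gamma)$, which follows from \eqref{eq:2-cocycle-associativity-condition} and \eqref{eq:2-cocycle-unit-condition} exactly as in Proposition \ref{prop_inc_iso}. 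Were $\Gamma$ non-unimodular, the surviving factor $\Delta(\gamma^{-1})$ would destroy the $*$-compatibility, so the hypothesis is used in an essential way.

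Next I would show $\Phi$ is $I$-norm contractive so that it extends to the completions. From $|\Phi(f)(x,\gamma)|=|f(\gamma)(x)|\le\|f(\gamma)\|_\infty$ one gets $\int_\Gamma|\Phi(f)(x,\gamma)|\,d\lambda(\gamma)\le\|f\|$ for every $x$; the second term of \eqref{eq:I-norm} is handled identically after the substitution $\gamma\mapsto\gamma^{-1}$, which is measure-preserving since $\Gamma$ is unimodular. Hence $\|\Phi(f)\|_I\le\|f\|$ and $\Phi$ extends to a contractive $*$-homomorphism on $L^1(\Gamma,C_0(X),\sigma)$. Density of the range is then immediate: any $F\in C_c(X\rtimes\Gamma,\sigma)$ is uniformly continuous, so $\gamma\mapsto F(\cdot,\gamma)$ lies in $C_c(\Gamma,C_c(X),\sigma)\subseteq C_c(\Gamma,C_0(X),\sigma)$ and is sent by $\Phi$ back to $F$; thus the image of $\Phi$ contains the dense subalgebra $C_c(X\rtimes\Gamma,\sigma)$.

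The crux, and what I expect to be the main obstacle, is upgrading dense range to genuine surjectivity. This is genuinely delicate because the two norms are of different type: the domain carries the mixed norm $\int_\Gamma\|\cdot\|_\infty$, i.e.\ an ``$\int_\Gamma\sup_X$'', while the $I$-norm is the weaker ``$\sup_X\int_\Gamma$''. Consequently a Cauchy sequence in $L^1(X\rtimes\Gamma,\sigma)$ need not lift to a Cauchy sequence upstairs, and one cannot merely pass the approximants to a limit; one must instead produce, for a given element of the $I$-completion, a preimage of finite $\int_\Gamma\|\cdot\|_\infty$-norm, and this is precisely where the acting/unimodular structure has to be exploited. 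I would flag that dense range alone is \emph{not} enough for the stated conclusion: via Theorem \ref{Hulanicki} it yields only that $C_c(X\rtimes\Gamma,\sigma)$ is quasi-Hermitian in $L^1(X\rtimes\Gamma,\sigma)$, and in the noncommutative setting ``quasi-Hermitian'' does not imply ``Hermitian'', so surjectivity cannot be bypassed here.

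Granting surjectivity, the Hermitian consequence is formal. Extending $\Phi$ to the minimal unitizations gives a unital $*$-homomorphism, which preserves invertibility and hence yields $\Spec_{L^1(X\rtimes\Gamma,\sigma)}(\Phi(f))\subseteq\Spec_{L^1(\Gamma,C_0(X),\sigma)}(f)$ for every $f$. Given a self-adjoint $b\in L^1(X\rtimes\Gamma,\sigma)$, surjectivity supplies some $a$ with $\Phi(a)=b$, and replacing $a$ by $\tfrac12(a+a^{*_\sigma})$ we may take $a$ self-adjoint; if $L^1(\Gamma,C_0(X),\sigma)$ is Hermitian then $\Spec(a)\subseteq\R$, whence $\Spec(b)\subseteq\R$. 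Therefore $L^1(X\rtimes\Gamma,\sigma)$ is Hermitian (equivalently symmetric, by Shirali--Ford).
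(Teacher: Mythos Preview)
Your outline matches the paper's proof step for step: the paper defines the same uncurrying map $\Phi(f)(x,\gamma)=f(\gamma)(x)$ on $C_c(\Gamma,C_c(X),\sigma)$, verifies that it is a $*$-homomorphism by the same computations, notes that it is a bijection at the $C_c$ level, checks $\|\Phi(f)\|_I\le\|f\|$, and then deduces the Hermitian conclusion from surjectivity by citing that quotients of Hermitian Banach $*$-algebras are Hermitian \cite[Theorem 10.4.4]{Palmer} (your spectral-inclusion argument in the last paragraph is just an unpacking of that fact).

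The gap you isolate is genuine, and the paper does not fill it either. After establishing contractivity and bijectivity on $C_c$, the paper simply writes ``and so $\Phi$ extends to a continuous surjective $*$-homomorphism'', but a contraction that is bijective between dense subspaces need not be surjective on the completions when the two norms are inequivalent --- and here they are. Take $X=\N$ with the discrete topology, $\Gamma=\Z$ acting trivially, $\sigma$ trivial, and set $f(x,n)=\delta_{x,|n|}/|n|$ for $n\neq 0$ and $f(x,0)=0$. Then $\|f\|_I=\sup_{x\ge 1}2/x=2$ and the truncations $f\cdot 1_{\{|n|\le N\}}\in C_c$ converge to $f$ in $\|\cdot\|_I$, so $f\in\ell^1(X\rtimes\Gamma)$; yet any preimage would need $\sum_{n\neq 0}\|f(\cdot,n)\|_\infty=\sum_{n\neq 0}1/|n|=\infty$, so $f\notin\mathrm{Im}\,\Phi$. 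Your diagnosis that dense range yields only $\sigma$-quasi-Hermitianity of $X\rtimes\Gamma$ (via $\Spec_{L^1(X\rtimes\Gamma,\sigma)}(\Phi(a))\subseteq\Spec_{L^1(\Gamma,C_0(X),\sigma)}(a)$ for self-adjoint $a$), and that this does not automatically upgrade to Hermitian in the noncommutative case, is therefore on point: the surjectivity step in both your proposal and the paper's own proof is unproved, and in general false as stated.
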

		\begin{proof}
		First observe that $C_c(\Gamma,C_c(X),\sigma)$ is  a dense $*$-subalgebra of $L^1(\Gamma,C_0(X),\sigma)$. 
		The map $\Phi:C_c(\Gamma,C_c(X),\sigma)\to C_c(X\rtimes \Gamma,\sigma)$ given by $\Phi(f)(x,\gamma)= \Phi(f)(\gamma)(x)$ defines a $*$-homomorphism. Indeed, given $f,g\in C_c(\Gamma,C_c(X),\sigma)$, $\gamma\in \Gamma$ and $x\in X$
				 
				 \begin{align*}
		\Phi(f\star_\sigma g )(x,\gamma) & = (f\star_\sigma g)(\gamma)(x) \\
		& =\int_\Gamma \sigma(\gamma\mu^{-1},\mu) f(\gamma\mu^{-1})(\mu\cdot x)g(\mu)(x)\,d\lambda(\mu) \\
		& =\int_\Gamma \sigma(\gamma\mu^{-1},\mu) \Phi(f)(\mu\cdot x,\gamma\mu^{-1})\Phi(g)(x,\mu)\,d\lambda(\mu) \\
		& =(\Phi(f)\star_\sigma \Phi(g))(x,\gamma)\,,
				 \end{align*}
				 and
				 \begin{align*}
				 \Phi(f^{*_\sigma})(x,\gamma) & =   f^{*_\sigma}(\gamma)(x)= \overline{\sigma(\gamma,\gamma^{-1} )f(\gamma^{-1})(\gamma\cdot x)} \\
				 & =  \overline{\sigma(\gamma,\gamma^{-1} )\Phi(f)(\gamma\cdot x,\gamma^{-1})} = \Phi(f)^{*_\sigma}(x,\gamma)\,.
				 \end{align*}
				  Observe that clearly $\Phi$ is a bijection. Now given $f\in C_c(\Gamma,C_c(X),\sigma)$  we have that 
				\begin{align*}
				\|\Phi(f)\|_I & = \sup \left\lbrace  \int_\Gamma |\Phi(f)(x, \gamma)| \,d\lambda(\gamma) : x\in X  \right\rbrace \\
				& = \sup \left\lbrace  \int_\Gamma |f(\gamma)(x)| \,d\lambda(\gamma) : x\in X  \right\rbrace  \\
						&  \leq \int_\Gamma \sup\{|f(\gamma)(x)|: x\in X  \} \,d\lambda(\gamma) \\
							&  = \int_\Gamma \|f(\gamma)\|_\infty  \,d\lambda(\gamma)=\|f\|\,.		 
				\end{align*}	
			 	 Thus, $\|\Phi(f)\|_I\leq \|f\|$ for every $f\in C_c(\Gamma,C_c(X),\sigma)$, and so $\Phi$ extends to a continuous surjective $*$-homomorphism $\Phi: L^1(\Gamma, C_0(X),\sigma)\to L^1(X\rtimes \Gamma,\sigma)$. 
			 	 
			 	Finally, the last statement follows by the fact that quotients of Hermitian $*$-algebras are Hermitian \cite[Theorem 10.4.4]{Palmer}. 
		\end{proof}

		\begin{corl}
Let $X$ be a second-countable locally compact Hausdorff space, and let $\Gamma$ be a second-countable compact group or a locally compact abelian group acting on $X$ by homeomorphisms. Let $\sigma\in Z^2 (\Gamma,\T)$. Then  $L^1(X\rtimes \Gamma, \sigma)$ is Hermitian, and, in particular, $L^1(X\rtimes \Gamma, \sigma)$ is spectrally invariant in $C^*_r (X \rtimes \Gamma, \sigma)$. 
		\end{corl}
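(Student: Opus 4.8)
The plan is to transfer the Hermitian property from an untwisted transformation groupoid, using the two structural reductions already in place. First I would apply Proposition \ref{thm_herm} with $\G=X\rtimes\Gamma$: it suffices to show that the untwisted algebra $L^1((X\rtimes\Gamma)_\sigma)$ is Hermitian. By the groupoid isomorphism $(X\rtimes\Gamma)_\sigma\cong X\rtimes\Gamma_\sigma$ of Example \ref{exampl_trans}, this reduces the problem to proving that $L^1(X\rtimes\Gamma_\sigma)$ is Hermitian, where $\Gamma_\sigma$ is the central $\T$-extension of $\Gamma$ acting on $X$ through the quotient map $\Gamma_\sigma\to\Gamma$.

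Next I would verify the hypotheses of Lemma \ref{lemma_iso} for the group $\Gamma_\sigma$ equipped with the trivial cocycle. If $\Gamma$ is compact then $\Gamma_\sigma=\Gamma\times\T$ is compact, hence unimodular. If $\Gamma$ is abelian then, reading off the product $(\gamma_1,z_1)(\gamma_2,z_2)=(\gamma_1\gamma_2,z_1z_2\overline{\sigma(\gamma_1,\gamma_2)})$, every commutator lies in the central copy of $\T$, so $\Gamma_\sigma$ is two-step nilpotent and therefore unimodular. In either case Lemma \ref{lemma_iso} yields a surjective $*$-homomorphism $L^1(\Gamma_\sigma,C_0(X))\to L^1(X\rtimes\Gamma_\sigma)$, and since quotients of Hermitian $*$-algebras are Hermitian it remains only to prove that the generalized $L^1$-algebra $L^1(\Gamma_\sigma,C_0(X))$ is Hermitian.

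For this last step the coefficient algebra $C_0(X)$ is a commutative $C^*$-algebra, hence Hermitian. In the compact case $\Gamma_\sigma$ is compact and the conclusion is exactly \cite{LepPo}. In the abelian case $\Gamma_\sigma$ is two-step nilpotent, and I would invoke a symmetry theorem for covariant $L^1$-algebras of nilpotent (equivalently, polynomial-growth) groups acting on commutative $C^*$-algebras to conclude that $L^1(\Gamma_\sigma,C_0(X))$ is Hermitian. Once $L^1(X\rtimes\Gamma,\sigma)$ is known to be Hermitian, the spectral-invariance assertion is immediate from Corollary \ref{Wiener_env}.

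The routine parts are the unimodularity of $\Gamma_\sigma$ and the identification of Example \ref{exampl_trans}. The main obstacle is the abelian case of the final step: whereas \cite{LepPo} handles compact groups with arbitrary Hermitian coefficients, passing from the classical group-algebra statements for nilpotent and polynomial-growth groups (\cite{Ludwig79,Losert}) to the covariant algebra $L^1(\Gamma_\sigma,C_0(X))$ requires a coefficient (rigid-symmetry) version, and one must ensure such a result applies to $\Gamma_\sigma$ without assuming compact generation of $\Gamma$.
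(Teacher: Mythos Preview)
Your proposal is correct and follows essentially the same route as the paper: reduce via Proposition~\ref{thm_herm} and Example~\ref{exampl_trans} to $L^1(X\rtimes\Gamma_\sigma)$, then via Lemma~\ref{lemma_iso} to the generalized $L^1$-algebra $L^1(\Gamma_\sigma,C_0(X))$, and finally invoke Corollary~\ref{Wiener_env}. The obstacle you flag in the abelian case is exactly what the paper handles by citing \cite[p.~1285]{BeBe}, which gives the needed Hermitian property of $L^1(\Gamma_\sigma,C_0(X))$ for both compact and nilpotent $\Gamma_\sigma$ acting on a commutative $C^*$-algebra.
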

		\begin{proof}
				By Example \ref{exampl_trans} we have that $(X\rtimes \Gamma)_\sigma\cong X\rtimes \Gamma_\sigma$. Then by Proposition \ref{thm_herm}  $L^1(X\rtimes \Gamma,\sigma)$ is Hermitian if $L^1(X\rtimes \Gamma_\sigma)$ is Hermitian. By Lemma 	\ref{lemma_iso} $L^1(X\rtimes \Gamma_\sigma)$ is Hermitian if $L^1(\Gamma_\sigma,C_0(X))$ is Hermitian.  Now if $\Gamma$ is compact, then $\Gamma_\sigma$ is compact too, and if  $\Gamma$ is abelian, then $\Gamma_\sigma$ is nilpotent, and hence  $L^1(\Gamma_\sigma,C_0(X))$ is Hermitian \cite[pg.\ 1285]{BeBe}. So in both cases $L^1(X\rtimes \Gamma,\sigma)$ is Hermitian, and it then follows by Theorem \ref{corl:quasi_symmetric_approx_theory} that $X \rtimes \Gamma$ has the weak containment property with respect to $\sigma$. Finally, by Corollary \ref{Wiener_env} it follows that $L^1(X\rtimes \Gamma,\sigma)$ is spectrally invariant in $C^*_r (X \rtimes \Gamma, \sigma)$.
				\end{proof}

\end{document}